\newtheorem {theorem}{Theorem} 
\newtheorem {proposition}[theorem]{Proposition}
\newtheorem {lemma}[theorem]{Lemma}
\newtheorem {corollary}[theorem]{Corollary}
\theoremstyle{definition}
\newtheorem {remark}[theorem]{Remark}
\newcommand{\Var}{\operatorname{Var}}
\def\ba{\begin{array}}
\def\ea{\end{array}}
\def\bea{\begin{eqnarray} \label}
\def\eea{\end{eqnarray}}
\def\be{\begin{equation} \label}
\def\ee{\end{equation}}
\def\bit{\begin{itemize}}
\def\eit{\end{itemize}}
\def\ben{\begin{enumerate}}
\def\een{\end{enumerate}}
\def\lan{\langle}
\def\ran{\rangle}
\def\CC{\mathbb{C}}
\def\EE{\mathbb{E}}
\def\PP{\mathbb{P}}
\def\RR{\mathbb{R}}
\def\SS{\mathbb{S}}
\def\g{\gamma}
\def\s{\sigma}
\def\D{\Delta}
\def\bW{\mathbf{W}}
\def\cC{\mathcal{C}}
\def\cF{\mathcal{F}}
\def\cH{\mathfrak{H}}
\def\cI{\mathcal{I}}
\def\cN{\mathcal{N}}
\def\sA{\mathscr{A}}
\def\dint{\textup{d}}
\def\s{\otimes}
\def\it{{\rm int}}
\def\cl{{\rm cl}}
\def\cum{{\rm cum}}
\begin{document}

\title{\bfseries Cumulants on Wiener chaos: moderate deviations and the fourth moment theorem}

\author{Matthias Schulte\footnotemark[1]\ \ and Christoph Th\"ale\footnotemark[2]}

\date{}
\renewcommand{\thefootnote}{\fnsymbol{footnote}}
\footnotetext[1]{Karlsruhe Institute of Technology, Department of Mathematics, D-76128 Karlsruhe, Germany. E-mail: matthias.schulte@kit.edu}

\footnotetext[2]{Ruhr University Bochum, Faculty of Mathematics, D-44780 Bochum, Germany. E-mail: christoph.thaele@rub.de}

\maketitle

\begin{abstract}
A moderate deviation principle as well as moderate and large deviation inequalities for a sequence of elements living inside a fixed Wiener chaos associated with an isonormal Gaussian process are shown. The conditions under which the results are derived coincide with those of the celebrated fourth moment theorem of Nualart and Peccati. The proofs rely on sharp estimates for cumulants. As applications, explosive integrals of a Brownian sheet, a discretized version of the quadratic variation of a fractional Brownian motion and the sample bispectrum of a spherical Gaussian random field are considered.
\bigskip
\\
{\bf Keywords}. {Contractions, cumulants, isonormal Gaussian process, large deviation proba\-bilities, matching number, moderate deviations, multiple stochastic integral, Wiener chaos.}\\
{\bf MSC}. Primary 60F10, 60H05; Secondary 60G15.
\end{abstract}

\section{Introduction}

In the last few years, the theory around probabilistic approximations of multiple stochastic integrals has advanced significantly. One of the cornerstones in this context is the so-called fourth moment theorem of Nualart and Peccati. To state it, let for simplicity $(A,\sA)$ be a Polish space equipped with a non-atomic $\sigma$-finite measure $\mu$ and let $(h_n:n\geq 1)$ be a sequence of symmetric, measurable and square-integrable functions on $A^q$ for a fixed integer $q\geq 2$. We assume that $h_n$ satisfies $\|h_n\|_{L^2(A^q)}=1$ for all $n\geq 1$, where $\|\,\cdot\,\|_{L^2(A^q)}$ stands for the usual norm on $L^2(A^q)$. Now, denote by $F_n=I_q(h_n)$ the multiple stochastic integral of order $q$ of $h_n$ with respect to a Gaussian random measure on $A$ with control $\mu$. The case $q=1$ is trivial, because then $F_n$ is a standard Gaussian random variable, and, thus, excluded. The fourth moment theorem (see \cite{NualartPeccati} and also \cite[Theorem 5.2.7]{NourdinPeccatiBook}) asserts that the sequence $(F_
n:n\geq 1)$ converges in distribution to a Gaussian random variable with variance $q!$ if and only if, as $n\to\infty$, the fourth cumulant of $F_n$ tends to zero, i.e.,
$$
\lim_{n\to\infty}\cum_4(F_n)=\lim_{n\to\infty}\EE[F_n^4]-3(q!)^2=0\,,
$$
or equivalently, if
\begin{equation}\label{eq:CLTCondition}
\lim_{n\to\infty}\|h_n\otimes_rh_n\|_{L^2(A^{2(q-r)})}=0\quad\text{for all }r\in\{1,\ldots,q-1\}\,.
\end{equation}
Here, $\otimes_r$ stands for the $r$th contraction operator, details and precise definitions follow below. In addition, one has the following estimate for the rate of convergence (see \cite{NourdinPeccati09} or combine Equations (5.2.6) and (5.2.13) in \cite{NourdinPeccatiBook}):
\begin{equation}\label{eq:RateAusDemBuch}
\begin{split}
d_{TV}(F_n,N) &:=\sup_{B\subset\RR\ \text{Borel set}}\big|\PP(F_n\in B)-\PP(N\in B)\big| \\ & \leq c_q\,\max_{r=1,\ldots,q-1}\|h_n\otimes_rh_n\|_{L^2(A^{2(q-r)})}\,,
\end{split}
\end{equation}
where $N$ stands for a centred Gaussian random variable with variance $q!$ and $c_q>0$ is a constant only depending on $q$. Note that the variance $q!$ comes from our normalization $\|h_n\|_{L^2(A^q)}=1$ also used below. We emphasize that in \cite{NourdinPeccatiOptimalRates} an improved and in fact optimal rate for $d_{TV}(F_n,N)$ has been derived in terms of the third and the fourth cumulant of $F_n$.

The aim of this paper is to show that under condition \eqref{eq:CLTCondition} the sequence $(F_n:n\geq 1)$ also satisfies a moderate deviation principle and fulfils moderate and large deviation inequalities. This is a direct refinement of the fourth moment theorem.  A key step in our proof is to control the growth of cumulants. Such an approach has previously been used in \cite{NourdinPeccatiCumulants} to give an alternative proof of the fourth moment theorem. To extend this to moderate deviations, we derive considerably sharper estimates for cumulants by combining classical cumulant formulas for multiple stochastic integrals with lower bounds on the matching number of regular multigraphs. We use them together with classical large deviation results of Bentkus, Rudzkis, Saulis and Statulevi\v{c}ius (see the book \cite{SaulisBuch}) and a transfer principle from the paper \cite{EichelsbacherDoering} of D\"oring and Eichelsbacher, which allows to deduce moderate deviation principles from cumulant bounds. To the best of our knowledge, the moderate deviation principle for sequences of multiple stochastic integrals is new. As applications of our general results we derive moderate deviation principles and related deviation inequalities for explosive integrals of a Brownian sheet, a discretized version of the quadratic variation of a fractional Brownian motion on the unit interval and the sample bispectrum of a Gaussian random field on the two-dimensional unit sphere.

\medskip

To motivate our results and to introduce some basic ideas, let us briefly consider the case of a sum $S_n=X_1+\ldots+X_n$ of $n$ independent and identically distributed centred random variables with variance $\sigma^2>0$. Let us assume that the random variables $(X_i: i\geq 1)$ have finite exponential moments in that $\EE[e^{\lambda X_1}]<\infty$ for all $|\lambda|\leq\Lambda$ and some $\Lambda>0$. Denoting by $\Phi_{\sigma^2}(\,\cdot\,)$ the distribution function of a centred Gaussian random variable with variance $\sigma^2$, the central limit theorem ensures that
\begin{equation}\label{CLT}
\lim_{n\to\infty}\frac{\PP({S_n/\sqrt{n}}\geq z)}{1-\Phi_{\sigma^2}(z)}=1\qquad\text{and}\qquad\lim_{n\to\infty}\frac{\PP({S_n/\sqrt{n}}\leq -z)}{\Phi_{\sigma^2}(-z)}=1
\end{equation}
for fixed $z\geq 0$. In the theory of moderate deviations one is interested in the following two questions refining the central limit theorem.
\begin{itemize}
\item[1.] How fast do the ratios in \eqref{CLT} converge to $1$?
\item[2.] Under which conditions does relation \eqref{CLT} remain valid if $z$ is growing with $n$, i.e., if $z$ is replaced by $a_nz$ with $a_n\to\infty$, as $n\to\infty$?
\end{itemize}
One way to approach moderate deviations is to provide upper bounds for the ratios in \eqref{CLT} on a logarithmic scale. Here, one can show that there are constants $n_0,z_0,c>0$ only depending on the distribution of the random variable $X_1$ such that, for all $n\geq n_0$ and $0\leq z \leq z_0 \sqrt{n}$,
$$
\Big|\log\frac{\PP(S_n/\sqrt{n}\geq z)}{1-\Phi_{\sigma^2}(z)}\Big|\leq c\,\frac{1+(z/\sigma)^3}{\sqrt{n}}
$$
(a similar result is also available for the ratio $\PP(S_n/\sqrt{n}\leq -z)/ \Phi_{\sigma^2}(-z)$), answering thereby the first question. To handle the second problem stated above, one can follow Varadhan's ideas for large deviations and study the re-scaled logarithm of the probability $\PP({S_n/\sqrt{n}}\geq a_nz)=\PP({S_n/(a_n\sqrt{n})}\geq z)$. In the present situation, it is known that
\begin{equation}\label{MPDiid}
\lim_{n\to\infty}a_n^{-2}\,\log\PP\big(S_n/(a_n\sqrt{n})\in [z,\infty)\big) = - \frac{z^2}{2\sigma^2}\,,
\end{equation}
whenever the sequence $(a_n:n\geq 1)$ satisfies $a_n\to\infty$ and $a_n/\sqrt{n}\to 0$, as $n\to\infty$. We remark that a similar relation also holds for the interval $(-\infty,-z]$. Extending \eqref{MPDiid} in a suitable way from intervals $B=[z,\infty)$ to arbitrary Borel sets $B\subset\RR$, one arrives at the usual form of a large deviation principle with speed $a_n^{-2}$ and rate function $z^2/(2\sigma^2)$ (formal definitions will be given below). Since the rate function is inherited from the central limit theorem, this large deviation principle is usually referred to as a moderate deviation principle.

\medskip



The rest of this paper is structured as follows. In Section \ref{sec:framework} we introduce the notation and our general set-up. The main results are stated in Section \ref{sec:Results}, while Sections \ref{sec:BrownianSheet}, \ref{sec:fBM} and \ref{sec:RandomFields} present the applications to Brownian sheet, fractional Brownian motion and spherical Gaussian random fields. The proofs of the main results are the content of the final Section \ref{sec:Proofs}.

\section{Preliminaries}\label{sec:framework}

Through this paper, $(\Omega,\cF,\PP)$ denotes an underlying probability space and expectation with respect to $\PP$ is indicated by $\EE$. By $L^2(\Omega,\cF,\PP)$ we denote the space of square-integrable random variables on $(\Omega,\cF,\PP)$.

\paragraph{Large and moderate deviation principles.}
Let $(X_n:n\geq 1)$ be a sequence of real-valued random variables, let $(s_n:n\geq 1)$ be a sequence of positive real numbers such that $s_n\to\infty$, as $n\to\infty$, and let $\cI:\RR\to[0,\infty]$ be a lower semi-continuous function with compact level sets such that $\cI\not\equiv 0$ and $\cI\not\equiv\infty$. We say that the sequence $(X_n:n\geq 1)$ satisfies a large deviation principle (LDP) with speed $s_n$ and rate function $\cI$ if for all Borel sets $B\subset\RR$ we have that
\begin{equation*}
\liminf_{n\to\infty} s_n^{-1}\log\PP(X_n\in B)\geq -\inf_{z\in\it(B)}\cI(z)
\end{equation*}
and
\begin{equation*}
\limsup_{n\to\infty} s_n^{-1}\log\PP(X_n\in B)\leq -\inf_{z\in\cl(B)}\cI(z)\,,
\end{equation*}
where $\it(B)$ and $\cl(B)$ stand for the interior and the closure of $B$, respectively, see Chapter 1.2 in \cite{DemboZeitouni}.

In our paper, we will deal with a special class of LDPs. To introduce them, fix a real-valued sequence $(a_n:n\geq 1)$ such that $a_n\to\infty$, as $n\to\infty$, and suppose that the random variables $(X_n:n\geq 1)$ satisfy a central limit theorem with limiting Gaussian distribution $\cN(0,\sigma^2)$ for some $\sigma^2>0$. In the following, we say that the re-scaled sequence $(a_n^{-1}X_n:n\geq 1)$ satisfies a moderate deviation principle (MDP) if it satisfies a LDP with speed $s_n=a_n^2$ and Gaussian rate function $\cI(z)=z^2/(2\sigma^2)$. Typically, a MDP is valid for a whole range of scales $(a_n:n\geq1)$.

\paragraph{Isonormal Gaussian processes and chaotic representation.}
In this paper we deal with sequences living inside a fixed Wiener chaos associated with an isonormal Gaussian process. To introduce the set-up formally, let here and through the rest of this paper $\cH$ be a real separable Hilbert space with inner product $\lan\,\cdot\,,\,\cdot\,\ran_\cH$ and norm $\|\,\cdot\,\|_\cH$. We denote for integers $q\geq 1$ by $\cH^{\otimes q}$ the $q$th tensor power and by $\cH^{\odot q}$ the $q$th symmetric tensor power of $\cH$. We supply $\cH^{\otimes q}$ with the canonical scalar product $\lan\,\cdot\,,\,\cdot\,\ran_{\cH^{\otimes q}}$ and norm $\|\,\cdot\,\|_{\cH^{\otimes q}}$, while $\cH^{\odot q}$ is equipped with the norm $\sqrt{q!}\|\,\cdot\,\|_{\cH^{\otimes q}}$. By $X=(X(h):h\in\cH)$ we indicate an isonormal Gaussian process over $\cH$ defined on our underlying probability space $(\Omega,\cF,\PP)$ and assume that $\cF=\sigma(X)$. In other words, $X$ is a family consisting of centred Gaussian random variables such that $\EE[X(h)X(h')]=\lan h,h'\ran_\cH$ for $h,h'\in\cH$. The $q$th Wiener chaos $\cC_q$ associated with $X$ is the closed linear subspace of $L^2(\Omega,\cF,\PP)$, which is generated by random variables of the form $H_q(X(h))$, where $H_q$ is the $q$th
Hermite polynomial and $h\in\cH$ satisfies $\|h\|_\cH=1$. We also put $\cC_0:=\RR$. It is well known that the mapping $h^{\otimes q}\mapsto H_q(X(h))$ can be extended to a linear isometry $I_q$ from $\cH^{\odot q}$ to $\cC_q$, cf.\ \cite[Chapter 2]{NourdinPeccatiBook}. For $q=0$ put $I_0(c)=c$ for all $c\in\RR$. In the particular case that $\cH=L^2(A)$ with a Polish space $(A,\mathcal{A})$ and a non-atomic $\sigma$-finite measure $\mu$, $I_q(\,\cdot\,)$ has an interpretation as multiple stochastic integral of order $q$ with respect to a Gaussian random measure on $A$ with control $\mu$ as discussed in the introduction, cf.\ \cite[Section 2.7.1]{NourdinPeccatiBook}.

It is a classical result in stochastic analysis (see \cite[Theorem 2.2.4]{NourdinPeccatiBook}, for example) that $L^2(\Omega,\cF,\PP)$ can be decomposed into an infinite orthogonal sum of Wiener chaoses $\cC_q$, $q\geq 0$. In particular, any $F\in L^2(\Omega,\cF,\PP)$ can be represented as
$$
F=\sum_{q=0}^\infty I_q(h^{(q)})
$$
with $h^{(0)}=\EE[F]$ and uniquely determined elements $h^{(q)}\in\cH^{\odot q}$, $q\geq 1$. We finally notice that random variables of the form $I_q(h)$ for $h\in\cH^{\odot q}$ satisfy $\EE[I_q(h)]=0$, $\EE[I_q(h)^2]=q!\|h\|_{\cH^{\otimes q}}^2$ and have finite moments of all orders due the hypercontractivity property \cite[Theorem 2.7.2]{NourdinPeccatiBook}.

\paragraph{Contractions.}
Let $(e_n: 1\leq n \leq \dim \cH)$ if $\dim \cH<\infty$ or $(e_n:n\geq 1)$ if $\dim \cH=\infty$ be a complete orthonormal system in $\cH$. For integers $p,q\geq 1$, $f\in\cH^{\odot p}$, $g\in\cH^{\odot q}$ and $r\in\{1,\ldots,\min(p,q)\}$ we denote by $f\otimes_rg$ the $r$th contraction of $f$ and $g$ defined as
$$
f\otimes_rg:=\sum_{i_1,\ldots,i_r=1}^{\dim \cH}\lan f,e_{i_1}\otimes\cdots\otimes e_{i_r}\ran_{\cH^{\otimes r}}\otimes\lan g,e_{i_1}\otimes\cdots\otimes e_{i_r}\ran_{\cH^{\otimes r}}\,,
$$
see \cite[Chapter B.4]{NourdinPeccatiBook}. We notice that $f\otimes_qg=\lan f,g\ran_{\cH^{\otimes q}}$ if $p=q$. In case that $\cH=L^2(A)$ for a Polish space $(A,\mathcal{A})$ and a non-atomic $\sigma$-finite measure $\mu$ we have that $\cH^{\odot q}=L^2_s(A^q)$ (that is the subspace of $\mu^q$-a.e.\ symmetric functions in $L^2(A^q))$ and that
\begin{align*}
f\s_rg(a_1,\ldots,a_{p+q-2r})=\int_{A^r} &f(x_1,\ldots,x_r,a_1,\ldots,a_{p-r})\\
&\qquad \times g(x_1,\ldots,x_r,a_{p-r+1},\ldots,a_{p+q-2r})\,\mu^r(\dint(x_1,\ldots,x_r))
\end{align*}
for $f\in L_s^2(A^p)$, $g\in L_s^2(A^q)$ and $r\in\{1,\ldots,\min(p,q)\}$. In other words $f\s_rg$ is a function of $p+q-2r$ arguments, which arises from the tensor product of $f$ and $g$ by identifying $r$ variables which are then integrated out.

\paragraph{Cumulants.}
Let $F$ be a real-valued random variable such that $\EE[|F|^m]<\infty$ for some integer $m\geq 1$. By $\phi_F(t):=\EE[e^{\mathfrak{i}tF}]$, $t\in\RR$, we denote the characteristic function of $F$, where $\mathfrak{i}$ is the imaginary unit. Then the $m$th cumulant of $F$ (sometimes also called semi-invariant) is defined as
\begin{equation}\label{eq:DefCumulant}
\cum_m(F):=(-\mathfrak{i})^m\frac{\dint^m}{\dint t^m}\log\phi_F(t)\Big|_{t=0}\,.
\end{equation}
For example, $\cum_1(F)$ coincides with the mean of $F$, while $\cum_2(F)$ is its variance. Moreover, for centred random variables $F$ we have the relations $\cum_3(F)=\EE[F^3]$ and $\cum_4(F)=\EE[F^4]-3(\EE[F^2])^2$.

\section{Main results}\label{sec:Results}

Let $\cH$ be a real separable Hilbert space underlying our isonormal Gaussian process and $(h_n:n\geq 1)$ be a sequence of elements of $\cH^{\odot q}$ for some fixed integer $q\geq 2$. To simplify some of our arguments below, we assume without loss of generality that $\|h_n\|_{\cH^{\otimes q}}=1$ for each $n\geq 1$. This implies that $\Var I_q(h_n)=\EE[I_q(h_n)^2]=q!\|h_n\|_{\cH^{\otimes q}}^2=q!$ for all $n\geq 1$. We further define, for $n\geq 1$,
\begin{equation}\label{eq:defKn}
K_n:=\max\limits_{r=1,\ldots,q-1}\|h_n\otimes_rh_n\|_{\cH^{\otimes 2(q-r)}}
\end{equation}
and put
\begin{equation}\label{eq:defAlphaq}
\alpha(q) := \frac{q+2}{3q+2}\quad \text{($q$ even)}\,,\qquad \alpha(q) := \frac{q^2-q-1}{q(3q-5)}\quad \text{($q$ odd)}\,.
\end{equation}
Our first result delivers a moderate deviation principle (MDP) for the sequence $(I_q(h_n):n\geq 1)$ in the regime in which $K_n\to 0$, as $n\to\infty$. As discussed in the introduction, this is precisely the situation under which the fourth moment theorem ensures that the sequence $(I_q(h_n):n\geq 1)$ satisfies a central limit theorem, recall \eqref{eq:RateAusDemBuch}.

\begin{theorem}\label{thm:MDP}
Define $F_n:=I_q(h_n)$ and suppose that $\lim\limits_{n\to\infty}K_n=0$. Further, let $(a_n:n\geq 1)$ be a real sequence such that
$$\lim\limits_{n\to\infty}a_n=\infty\qquad\text{and}\qquad\lim\limits_{n\to\infty}a_n/\Delta_n^{1/(q-1)}=0$$ with $\Delta_n= (q^{3q/2}K_n^{\alpha(q)})^{-1}$. Then the sequence $(a_n^{-1}F_n:n\geq 1)$ satisfies a MDP with speed $a_n^2$ and Gaussian rate function $\cI(z)=z^2/(2q!)$.
\end{theorem}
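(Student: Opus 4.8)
The plan is to obtain the MDP as a consequence of a quantitative bound on the cumulants of $F_n$ together with the general transfer principle of D\"oring and Eichelsbacher, which deduces moderate deviation principles directly from suitable cumulant estimates. Concretely, I would aim to show that there exist constants and a sequence $\gamma_n\to\infty$ such that the cumulants of $F_n$ satisfy a bound of the shape
\begin{equation*}
|\cum_m(F_n)|\leq \frac{(m!)^{1+\g}}{\Delta_n^{m-2}}\qquad\text{for all }m\geq 3,
\end{equation*}
where $\Delta_n=(q^{3q/2}K_n^{\a(q)})^{-1}$ is exactly the quantity appearing in the statement, and $\g\geq 0$ is a fixed constant depending only on $q$. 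This is the Bentkus--Rudzkis--Saulis--Statulevi\v{c}ius type cumulant condition whose role is documented in \cite{SaulisBuch} and \cite{EichelsbacherDoering}: once such a bound holds with $\Delta_n\to\infty$, the standard machinery yields an MDP with speed $a_n^2$ for every scale with $a_n\to\infty$ and $a_n/\Delta_n^{1/(1+2\g)}\to 0$, and the rate function is inherited from the limiting variance, here $q!$ by the normalization $\|h_n\|_{\cH^{\otimes q}}=1$. The exponent $1/(q-1)$ in the admissible-scale condition suggests taking $\g$ so that $1+2\g=q-1$.

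The first substantive step is to control the second cumulant and confirm the Gaussian rate function. Since $\cum_2(F_n)=\Var I_q(h_n)=q!$ exactly, the limiting variance is $\sigma^2=q!$ and the rate function $\cI(z)=z^2/(2q!)$ is forced; I would also note $\cum_1(F_n)=\EE[I_q(h_n)]=0$. The heart of the argument is the cumulant estimate for $m\geq 3$. Here I would start from the classical diagram/cumulant formula for multiple stochastic integrals, expressing $\cum_m(I_q(h_n))$ as a sum over connected diagrams (equivalently, over connected regular multigraphs on $m$ vertices each of degree $q$) of iterated contractions of $h_n$ with itself. Each such term is a norm of nested contractions, and the combinatorial task is to bound the number of such diagrams and to bound each contraction norm by an appropriate power of $K_n=\max_r\|h_n\otimes_r h_n\|$, using $\|h_n\|_{\cH^{\otimes q}}=1$ and the Cauchy--Schwarz-type inequalities for contractions from \cite[Chapter B.4]{NourdinPeccatiBook}.

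The main obstacle, and the genuinely new ingredient flagged in the introduction, is extracting the sharp exponent $\a(q)$ from these contraction norms. The point is that a crude bound gives only a single factor of $K_n$ per diagram, which is too weak for moderate deviations; instead one must argue that a connected $q$-regular multigraph on $m$ vertices forces a large number of contraction factors to appear, each contributing a power of $K_n$. This is where lower bounds on the matching number of regular multigraphs enter: a large matching guarantees many edge-disjoint contractions, and the number of guaranteed factors, optimized over the parity cases for $q$, is precisely what produces the two formulas in \eqref{eq:defAlphaq}. I would therefore (i) reduce the norm of each diagram to a product over its edges, (ii) invoke a matching-number lower bound for connected $q$-regular multigraphs to count how many edges may be bounded by $K_n^{\a(q)}$ rather than by the trivial bound, and (iii) track the resulting powers of $q$ to produce the factor $q^{3q/2}$ in $\Delta_n$ and a factorial-type prefactor $(m!)^{1+\g}$ with $1+2\g=q-1$. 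Assembling these estimates into the required cumulant bound and then quoting the transfer principle of \cite{EichelsbacherDoering} completes the proof; the combinatorial optimization in step (ii) is the part I expect to be the most delicate.
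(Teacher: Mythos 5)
Your proposal follows essentially the same route as the paper: the cumulant bound $|\cum_m(F_n)|\leq (m!)^{1+\gamma}\Delta_n^{-(m-2)}$ with $\gamma=q/2-1$ (so $1+2\gamma=q-1$), obtained from the diagram formula for $\cum_m(I_q(h_n))$, a matching-number lower bound for connected $q$-regular multigraphs to extract $K_n^{L(q,m)}$ with $L(q,m)\geq\alpha(q)(m-2)$, a generalized Cauchy--Schwarz inequality, and the count of diagrams producing the $(m!)^{q/2}$ and $q^{3q/2}$ factors, followed by the D\"oring--Eichelsbacher transfer principle. This is exactly the paper's argument, so your plan is correct and on the intended track.
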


Theorem \ref{thm:MDP} can be seen as a direct refinement of the fourth moment theorem of Nualart and Peccati \cite{NualartPeccati}. It provides under exactly the same conditions further information about the distributional behaviour of the involved random variables in form of a MDP.

Our next result provides a version of Theorem \ref{thm:MDP}, which is more amenable to some concrete applications, see Section \ref{sec:RandomFields} for an example. It shows that the sequence $(F_n:n\geq 1)$ satisfies a MDP if $\cum_4(F_n)\to 0$, as $n\to\infty$. Defining
\begin{equation}\label{eq:DefinitionLn}
L_n:=(q\,q!)^{-1}\,\sqrt{\cum_4(F_n)}\,,\qquad n\geq 1\,,
\end{equation}
we see that the following result is a direct consequence of Theorem \ref{thm:MDP} and the estimate $K_n\leq L_n$ from \cite[Equation (4.5)]{BiermeEtAl}.

\begin{corollary}\label{cor:MDP}
Let $F_n:=I_q(h_n)$ and suppose that $\lim\limits_{n\to\infty}\cum_4(F_n)=0$. Then the conclusion of Theorem \ref{thm:MDP} remains valid with $K_n$ replaced by $L_n$ in the definition of $\Delta_n$.
\end{corollary}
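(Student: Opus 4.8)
The plan is to derive this corollary directly from Theorem \ref{thm:MDP}, treating it as a soft comparison statement rather than a fresh argument: the entire analytic content already resides in the theorem, and what remains is to check that the hypotheses formulated in terms of $L_n$ imply those formulated in terms of $K_n$.

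First I would invoke the inequality $K_n \leq L_n$ recorded in \cite[Equation (4.5)]{BiermeEtAl}, which controls each contraction norm appearing in \eqref{eq:defKn} by the (normalised) square root of the fourth cumulant. Since $L_n = (q\,q!)^{-1}\sqrt{\cum_4(F_n)}$, the assumption $\cum_4(F_n)\to 0$ forces $L_n\to 0$, and the comparison then gives $K_n\to 0$. Thus the standing hypothesis $\lim_{n\to\infty} K_n = 0$ of Theorem \ref{thm:MDP} holds automatically, and it remains only to transfer the scaling condition.

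Writing $\widetilde{\Delta}_n := (q^{3q/2}L_n^{\alpha(q)})^{-1}$ for the version of $\Delta_n$ with $K_n$ replaced by $L_n$, the key elementary point is that $\alpha(q)>0$ for every integer $q\geq 2$ in both parity cases of \eqref{eq:defAlphaq}; hence $x\mapsto x^{\alpha(q)}$ is increasing on $[0,\infty)$, and $K_n\leq L_n$ yields $K_n^{\alpha(q)}\leq L_n^{\alpha(q)}$, so that $\widetilde{\Delta}_n\leq\Delta_n$. As $1/(q-1)>0$, this in turn gives $\widetilde{\Delta}_n^{1/(q-1)}\leq\Delta_n^{1/(q-1)}$ and therefore $a_n/\Delta_n^{1/(q-1)}\leq a_n/\widetilde{\Delta}_n^{1/(q-1)}$ for all positive $a_n$. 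Consequently, any sequence $(a_n:n\geq 1)$ obeying $a_n\to\infty$ and $a_n/\widetilde{\Delta}_n^{1/(q-1)}\to 0$ also satisfies the scaling hypothesis $a_n/\Delta_n^{1/(q-1)}\to 0$ of Theorem \ref{thm:MDP}, and invoking that theorem delivers the claimed MDP with speed $a_n^2$ and Gaussian rate function $\cI(z)=z^2/(2q!)$. The only genuine thing to verify is the positivity of $\alpha(q)$, which guarantees the monotonicity driving the comparison; there is no further obstacle, precisely because the $L_n$-scaling condition is a priori stronger (more restrictive) than the $K_n$-scaling condition it replaces.
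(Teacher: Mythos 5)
Your proposal is correct and follows exactly the paper's route: the corollary is stated there as a direct consequence of Theorem \ref{thm:MDP} together with the estimate $K_n\leq L_n$ from \cite[Equation (4.5)]{BiermeEtAl}, and your verification that $\alpha(q)>0$ makes the $L_n$-scaling condition at least as restrictive as the $K_n$-one is precisely the (implicit) content of that one-line deduction.
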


Note that the moderate deviation principle in Theorem \ref{thm:MDP} or Corollary \ref{cor:MDP} for the sequence of random variables $F_n=I_q(h_n)$, $n\geq 1$, holds in a range of scales $(a_n:n\geq 1)$, which shrinks with growing $q$. The following example shows that this phenomenon is unavoidable. For this, recall that $H_q$ stands for the $q$th Hermite polynomial and that the random variables of the form $H_q(X(h))=I_q(h^{\otimes q})$ with $h\in\cH$ and $\|h\|_{\cH}=1$ are the basic building blocks of the $q$th Wiener chaos $\cC_q$.

\begin{proposition}\label{prop:SumHermite}
Suppose that $\dim \cH=\infty$, let $(e_k:k\geq 1)$ be a complete orthonormal system in $\cH$, fix $q\geq 2$ and for each $n\geq 1$ define $F_n:=\frac{1}{\sqrt{n}}\sum_{k=1}^n H_q(X(e_k))$. Then $F_n\in\cC_q$, $\Var F_n =q!$, $K_n=1/\sqrt{n}$ and there are constants $C,c,z_0>0$ such that, for any $n\geq 1$,
\begin{equation}\label{eq:LowerBoundHermite}
\PP(|F_n| \geq z) \geq C \exp(-c\, n^{1/q} z^{2/q}) \,, \quad z\geq z_0\,.
\end{equation}
Moreover, if $(a_n: n\geq 1)$ is a real sequence such that
\begin{equation}\label{eq:RangeNoMDP}
\lim_{n\to\infty} a_n = \infty \quad \text{ and } \quad \lim_{n\to\infty} {a_n}/{n^{1/(2q-2)}}=\infty \,,
\end{equation}
then the sequence $(a_n^{-1} F_n: n\geq 1)$ does not satisfy a MDP with speed $a_n^2$ and Gaussian rate function $\cI(z)=z^2/(2q!)$.
\end{proposition}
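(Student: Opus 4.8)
The plan is to realise $F_n$ as a single multiple integral with an explicit kernel and then to exploit that it is literally a normalised sum of independent copies of $H_q$ at independent standard Gaussians. Since $(e_k)$ is orthonormal, the variables $G_k:=X(e_k)$ are i.i.d.\ standard Gaussian, and $H_q(X(e_k))=I_q(e_k^{\otimes q})\in\cC_q$, so that
\begin{equation*}
F_n=I_q(h_n),\qquad h_n=\frac{1}{\sqrt{n}}\sum_{k=1}^n e_k^{\otimes q}\in\cH^{\odot q},
\end{equation*}
and in particular $F_n\in\cC_q$. The three preliminary identities are then bookkeeping with orthonormality: from $\lan e_k^{\otimes q},e_l^{\otimes q}\ran_{\cH^{\otimes q}}=\delta_{kl}$ one gets $\|h_n\|_{\cH^{\otimes q}}^2=1$ and hence $\Var F_n=q!$; and from $e_k^{\otimes q}\otimes_r e_l^{\otimes q}=\delta_{kl}\,e_k^{\otimes 2(q-r)}$ for $r\in\{1,\dots,q-1\}$ one obtains $h_n\otimes_r h_n=\frac{1}{n}\sum_{k=1}^n e_k^{\otimes 2(q-r)}$, whose norm is $1/\sqrt{n}$ for every such $r$; thus $K_n=1/\sqrt{n}$.

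The heart of the argument is the lower tail bound \eqref{eq:LowerBoundHermite}, which I would prove by a single-big-jump argument, reflecting that $H_q(G)$ has tails of order $\exp(-\tfrac12 s^{2/q})$ and that the cheapest way to make $F_n$ large is to make one summand large. Splitting $F_n=\frac{1}{\sqrt{n}}H_q(G_1)+\frac{1}{\sqrt{n}}\sum_{k=2}^n H_q(G_k)$, I would fix $t=(4z\sqrt{n})^{1/q}$ and work on the event $\{G_1\geq t\}\cap\{|\frac{1}{\sqrt{n}}\sum_{k=2}^n H_q(G_k)|\leq M\}$. For $z_0$ large enough the bound $t\geq t_q$ holds on $z\geq z_0$, where $t_q$ is a level beyond which $H_q(x)\geq\frac{1}{2} x^q$; then the first term is at least $\frac{1}{2\sqrt{n}}t^q=2z$. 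Choosing $M=\sqrt{2q!}$ and using $\Var(\frac{1}{\sqrt{n}}\sum_{k=2}^n H_q(G_k))\leq q!$ with Chebyshev's inequality, the remainder lies in $[-M,M]$ with probability at least $\tfrac12$, so on the event $F_n\geq 2z-M\geq z$ (enlarging $z_0\geq M$ if needed). By independence and the Gaussian tail lower bound $\PP(G_1\geq t)\geq\frac{t}{1+t^2}\frac{1}{\sqrt{2\pi}}e^{-t^2/2}$, absorbing the polynomial prefactor into the exponential gives $\PP(F_n\geq z)\geq C\exp(-c\,t^2)=C\exp(-c\,(4z)^{2/q}n^{1/q})$, which is \eqref{eq:LowerBoundHermite} after relabelling $c$; the decisive point is that forcing a single Gaussian to be large, not all of them, is what yields the exponent $n^{1/q}z^{2/q}$ rather than a larger power of $n$.

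For the last assertion I would turn \eqref{eq:LowerBoundHermite} against the upper bound built into the claimed MDP. Fix $z>0$; since $a_n z\geq z_0$ for large $n$, applying the tail bound at $a_n z$ gives
\begin{equation*}
a_n^{-2}\log\PP(a_n^{-1}F_n\geq z)=a_n^{-2}\log\PP(F_n\geq a_n z)\geq a_n^{-2}\log C-c\,z^{2/q}\,n^{1/q}a_n^{2/q-2}.
\end{equation*}
Because $\frac{2}{q}-2=-\frac{2(q-1)}{q}$, the last term equals $-c\,z^{2/q}\big(n/a_n^{2(q-1)}\big)^{1/q}$, and condition \eqref{eq:RangeNoMDP} is exactly $n/a_n^{2(q-1)}\to 0$; hence the right-hand side tends to $0$. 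As log-probabilities are nonpositive, this forces $\lim_{n\to\infty}a_n^{-2}\log\PP(a_n^{-1}F_n\geq z)=0$, whereas a MDP with speed $a_n^2$ and rate $\cI(z)=z^2/(2q!)$, applied to the closed set $[z,\infty)$, would force this quantity to be at most $-z^2/(2q!)<0$, a contradiction. I expect the main obstacle to sit in the single-big-jump step: making the Hermite leading-term estimate and the Gaussian tail bound uniform for $z\geq z_0$, and discarding the polynomial prefactors, all while keeping the precise power $n^{1/q}$ intact.
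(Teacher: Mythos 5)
Your proposal is correct, and the preliminary computations ($F_n=I_q(h_n)$ with $h_n=n^{-1/2}\sum_k e_k^{\otimes q}$, $\Var F_n=q!$, $K_n=1/\sqrt n$) as well as the final contradiction with the MDP upper bound on the closed set $[z,\infty)$ coincide with the paper's. Where you genuinely diverge is in the proof of the tail lower bound \eqref{eq:LowerBoundHermite}. The paper reduces to the two-summand variable $S_2=H_q(X(e_1))+H_q(X(e_2))$, invokes Janson's lower tail estimate for polynomials of Gaussians (Equation (6.10) in \cite{Janson}) to get $\PP(|S_2|\geq t)\geq \widetilde C\exp(-\widetilde c\,t^{2/q})$, and then transfers this to $S_n$ via the martingale inequality $\EE[|S_n|\,|\,X(e_1),X(e_2)]\geq |S_2|$ combined with the Paley--Zygmund-type bound $\PP(|Y|\geq \EE|Y|/2)\geq c_q$ for chaos elements (Theorem 6.9 in \cite{Janson}). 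You instead use the product structure directly: a single-big-jump event $\{G_1\geq (4z\sqrt n)^{1/q}\}$ intersected with a Chebyshev control of the remaining $n-1$ summands, plus the elementary Gaussian tail lower bound. Your route is more elementary and self-contained (no external chaos machinery beyond $\Var H_q(G)=q!$), and it yields the one-sided bound $\PP(F_n\geq z)\geq C\exp(-c\,n^{1/q}z^{2/q})$, which is in fact exactly what the final step needs; the uniformity issues you flag (choosing $z_0$ so that $t\geq t_q$ and $z\geq\sqrt{2q!}$ for all $n\geq1$, and absorbing the prefactor $t/(1+t^2)$ via $1/t\geq e^{-t^2/2}$) all resolve cleanly. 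The paper's route, by contrast, does not use independence of the summands and is the standard argument behind Janson's Theorem 6.12, hence more robust, but for this specific $F_n$ both arguments deliver the same exponent $n^{1/q}z^{2/q}$.
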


The random variables $(F_n: n\geq 1)$ considered in Proposition \ref{prop:SumHermite} satisfy the assumptions of Theorem \ref{thm:MDP}. It asserts that for a real sequence $(a_n:n\geq 1)$ the random variables $(a_n^{-1}F_n: n\geq 1)$ satisfy a MDP with speed $a_n^2$ and Gaussian rate function $\cI(z)=z^2/(2q!)$ if
\begin{equation}\label{eq:RangeMDP}
\lim_{n\to\infty} a_n = \infty \quad \text{ and } \quad \lim_{n\to\infty} {a_n}/{n^{\alpha(q)/(2q-2)}}=0\,.
\end{equation}
Comparing conditions \eqref{eq:RangeNoMDP} and \eqref{eq:RangeMDP}, we see that there are re-scalings $(a_n:n\geq 1)$ such that the sequence $(a_n^{-1}F_n:n\geq 1)$ of random variables from Proposition \ref{prop:SumHermite} satisfies a MDP and re-scalings $(a_n:n\geq 1)$ for which no such MDP can hold. If the growth of $(a_n:n\geq 1)$ is between that of $n^{\alpha(q)/(2q-2)}$ and $n^{1/(2q-2)}$, it remains an open problem whether $(a_n^{-1} F_n: n\geq 1)$ satisfies a MDP or not. We emphasize that via the method of cumulants -- which is the key tool to derive Theorem \ref{thm:MDP} -- it is not possible to decide this question. More precisely, our proof of Theorem \ref{thm:MDP} relies on a combination of cumulant estimates, lower bounds for the matching number of regular multigraphs and the general theory of large deviations from \cite{SaulisBuch} and its recent extension \cite{EichelsbacherDoering}. We only need to bound the cumulants of $F_n$ from above. However, since there are situations in which all of our estimates in the proof of Theorem \ref{thm:MDP} are sharp, we see that our cumulant bounds can in general not be improved as further discussed in Remark \ref{rem:lowerBound}. This in turn implies that the range of re-scalings $(a_n:n\geq 1)$ of the MDP in Theorem \ref{thm:MDP} or Corollary \ref{cor:MDP} is the best one can achieve by the method of cumulants. We remark that cumulant estimates for multiple stochastic integrals have also been performed in the proof of Theorem 5.8 in \cite{NourdinPeccatiCumulants}. While these bounds are sufficient to yield the fourth moment theorem, they are far too crude in order to show our results.

\begin{remark}\rm
Usually a large deviation principle is called a moderate deviation principle if the magnitude of re-scaling is between that of the central limit theorem and that of a law of large numbers. For the random variables in Theorem \ref{thm:MDP} the latter one does not apply since in general there is no law of large numbers. However, for $a_n=\sqrt{n}$ the random variables $(a_n^{-1}F_n: n\geq 1)$ in Proposition \ref{prop:SumHermite} satisfy the classical strong law of large numbers, which justifies the denotation as MDP.
\end{remark}

Our next theorem deals with moderate and large deviation probabilities. To state it, recall that $\Phi_{\sigma^2}$ denotes the distribution function of a centred Gaussian random variable with variance $\sigma^2$.

\begin{theorem}\label{thm:LDIs}
Let $F_n:=I_q(h_n)$ and put $\Delta_n:=(q^{3q/2}\,K_n^{\alpha(q)})^{-1}$.
\begin{itemize}
\item[(i)] There are constants $c_0,c_1,c_2>0$ depending only on $q$ such that, for $\D_n\geq c_0$ and $0\leq z\leq c_1\D_n^{1/(q-1)}$,
$$
\bigg|\log\frac{\PP(F_n\geq z)}{1-\Phi_{q!}(z)}\bigg|\leq c_2 \frac{1+(z/\sqrt{q!})^3}{\D_n^{1/(q-1)}}\qquad{\rm and}\qquad\bigg|\log\frac{\PP(F_n\leq -z)}{\Phi_{q!}(-z)}\bigg|\leq c_2\frac{1+(z/\sqrt{q!})^3}{\D_n^{1/(q-1)}}\,.
$$
\item[(ii)] For all $z\geq 0$ one has
\begin{equation}\label{eq:LDI}
\PP(|F_n|\geq z)\leq 2\exp\Big(-\frac{1}{4}\min\Big\{\frac{z^2}{2^{q/2}},(z\Delta_n)^{2/q}\Big\}\Big)\,.
\end{equation}
\item[(iii)] The statements (i) and (ii) remain valid if in the definition of $\Delta_n$, $K_n$ is replaced by $L_n$ given by \eqref{eq:DefinitionLn}.
\end{itemize}
\end{theorem}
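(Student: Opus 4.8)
The plan is to reduce all three statements to a single sharp bound on the cumulants of $F_n$ of Statulevi\v{c}ius type and then to feed this bound into the general deviation inequalities of Bentkus, Rudzkis, Saulis and Statulevi\v{c}ius collected in \cite{SaulisBuch}. Concretely, writing $\tilde F_n:=F_n/\sqrt{q!}$ so that $\Var\tilde F_n=1$, I would aim to establish a cumulant estimate of the form
\[
|\cum_m(\tilde F_n)|\le \frac{(m!)^{1+\gamma}}{\Delta_n^{\,m-2}}\,,\qquad m\ge 3\,,
\]
with $\gamma=(q-2)/2$ and $\Delta_n=(q^{3q/2}K_n^{\alpha(q)})^{-1}$ exactly as in the statement. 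The exponents appearing in (i) and (ii) then fall out of the Saulis framework: the moderate deviation region $0\le z\le c_1\Delta_n^{1/(q-1)}$ together with the logarithmic error $(1+z^3)/\Delta_n^{1/(q-1)}$ corresponds to the critical scale $\Delta_n^{1/(1+2\gamma)}$ with $1+2\gamma=q-1$, while the large deviation bound \eqref{eq:LDI} corresponds to the competing term $(z\Delta_n)^{1/(1+\gamma)}$ with $1/(1+\gamma)=2/q$. The factor $2^{q/2}$ in \eqref{eq:LDI} is precisely the price for passing from the normalised $\tilde F_n$ back to $F_n$ of variance $q!$. Thus, once the cumulant estimate is in hand, parts (i) and (ii) are a direct citation.

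The substantial work is the cumulant estimate itself. First I would invoke the classical diagram formula for cumulants of multiple stochastic integrals, by which $\cum_m(I_q(h_n))$ is a finite sum, indexed by connected multigraphs $G$ on $m$ vertices in which every vertex carries exactly $q$ half-edges and no loops occur, the latter because $I_q$ is a genuine multiple integral and self-contractions are excluded. Each such connected $q$-regular multigraph contributes an explicit iterated contraction of $m$ copies of $h_n$, which I would bound from above using $\|h_n\|_{\cH^{\otimes q}}=1$ together with $\|h_n\otimes_r h_n\|\le K_n$ for $1\le r\le q-1$. The number of diagrams and their combinatorial weights are of order $(m!)^{q/2}$ times a geometric factor $c^m$, which accounts for the growth $(m!)^{1+\gamma}=(m!)^{q/2}$ and, through the choice of the constant $q^{3q/2}$, for the $q$-dependent part of that factor. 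The normalisation $(m-2)$ in the exponent reflects the boundary case $m=2$: the only connected two-vertex configuration is the full $q$-fold edge, which collapses to $\langle h_n,h_n\rangle_{\cH^{\otimes q}}=1$ and contributes no power of $K_n$, consistent with $\alpha(q)\cdot 0=0$.

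The crux, and the step I expect to be hardest, is to guarantee that every diagram produces a sufficiently high power of $K_n$. A multi-edge of full multiplicity $q$ again collapses to a full contraction equal to $1$ and yields no factor of $K_n$, so only contractions of intermediate order $r\in\{1,\dots,q-1\}$ are useful. To extract a controlled number of such useful factors I would pass to a maximum matching of $G$ and show that along matched edges one can always force intermediate-order contractions, so that the number of factors of $K_n$ is bounded below in terms of the matching number $\nu(G)$. The decisive and genuinely new combinatorial input is therefore a sharp lower bound on the matching number of connected $q$-regular multigraphs on $m$ vertices, and hence a lower bound on the useful $K_n$-power, minimised over the whole class. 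Identifying the extremal multigraphs and carrying out this minimisation is what produces the exact value of $\alpha(q)$ and the split between the even and odd cases in \eqref{eq:defAlphaq}: for even $q$ the extremal configurations can concentrate their edges so as to leave fewer intermediate-order contractions, whereas for odd $q$ a parity constraint forces at least one additional such contraction per block. Translating the resulting extremal bound into the power $\alpha(q)(m-2)$ of $K_n$ completes the estimate; the case $q=2$, where the multigraphs are single cycles $C_m$ with $\nu(C_m)=\lfloor m/2\rfloor$ and $\alpha(2)=1/2$, serves as a useful sanity check.

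Finally, part (iii) is immediate once the cumulant bound is available. Since $\alpha(q)>0$ and $K_n\le L_n$ by \cite[Equation (4.5)]{BiermeEtAl}, replacing $K_n$ by $L_n$ only decreases $\Delta_n$ and hence enlarges the right-hand side of the cumulant estimate; the bound therefore holds verbatim with $L_n$ in place of $K_n$, and feeding it into the same Saulis inequalities reproduces (i) and (ii) with the $L_n$-version of $\Delta_n$.
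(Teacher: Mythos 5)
Your proposal is correct and follows essentially the same route as the paper: the diagram/partition formula for $\cum_m(I_q(h_n))$, a maximal matching of the associated connected $q$-regular loopless multigraph to extract $K_n$-factors via intermediate contractions and the generalised Cauchy--Schwarz inequality, the count $|\Pi(q[m])|\leq (m!)^{q/2}(q^{q/2})^m$ giving $\gamma=q/2-1$, and then the Statulevi\v{c}ius-type machinery from \cite{SaulisBuch} for (i) and (ii) and the inequality $K_n\leq L_n$ for (iii). The combinatorial step you single out as hardest --- the sharp lower bound $M(G)\geq L(q,m)$ with the even/odd dichotomy yielding $\alpha(q)$ --- is exactly where the paper invokes Nishizeki's theorems on maximum matchings of regular multigraphs \cite{Nishizeki}, so no new extremal analysis is needed.
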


\begin{remark}\label{rem:DiscussionWoher}\rm
\begin{itemize}
\item[(i)] The first part of Theorem \ref{thm:LDIs} is a consequence of a version of the celebrated `lemma' of Rudzkis, Saulis and Statulevi\v{c}ius \cite{RudzkisSaulisStatu} applied to $F_n$. Its original formulation involves the so-called Cram\'er-Petrov series. For clarity and to avoid heavy notation, we have decided to state here the result in a simplified form taken from \cite[Corollary 3.2]{EichelsbacherSchreiberRaic}, suppressing thereby higher-order terms of the expansion.
\item[(ii)] The second part of Theorem \ref{thm:LDIs} follows from a result of Bentkus and Rudzkis \cite{BentkusRudzkis}. Our statement is a simplified version taken from the Corollary after Lemma 2.4 in \cite{SaulisBuch}.
\item[(iii)] As Corollary \ref{cor:MDP}, Theorem \ref{thm:LDIs} (iii) is again a consequence of the estimate $K_n\leq L_n$.
\end{itemize}
\end{remark}
Let us relate our result obtained in Theorem \ref{thm:LDIs} (ii) to the existing literature, especially to the tail estimates for multiple stochastic integrals established by Major \cite{Major}. Theorem 8.5 there says that there is a constant $c>0$ only depending on $q$ such that $F_n=I_q(h_n)$ with $\|h_n\|_{\cH^{\otimes q}}=1$ satisfies
\begin{equation}\label{eq:Major}
\PP(|F_n|>z)\leq c\,\exp\Big(-\frac{1}{2}\Big(\frac{z}{\sqrt{q!}}\Big)^{2/q}\,\Big)\qquad\text{for all $z\geq 0$}\,.
\end{equation}
We also refer to \cite[Theorem 6.12]{Janson} for a similar result. If $z$ and $\Delta_n$ are sufficiently large, the minimum in \eqref{eq:LDI} is larger than $(z/\sqrt{q!})^{2/q}/2$, whence \eqref{eq:LDI} yields better estimates than \eqref{eq:Major}. This is the case if $K_n$ is small, meaning in view of \eqref{eq:RateAusDemBuch} that the distribution of $F_n$ is close to a Gaussian distribution.

Moreover, we would like to mention that Chapter 5.2 of \cite{SaulisBuch} also contains a set of large deviation inequalities for multiple stochastic integrals. These results only involve the $L^2$-norms of the integrands and not the contractions as encoded in the sequence $K_n$ defined at \eqref{eq:defKn} so that a connection to the central limit theorem remained hidden. Applying the Cauchy-Schwarz inequality to $K_n$, we can recover the results in \cite{SaulisBuch} from our Theorem \ref{thm:LDIs}. We also point out that in \cite{SaulisBuch} no connection has been made to a MDP as stated in Theorem \ref{thm:MDP}.

\section{Application to Brownian sheet}\label{sec:BrownianSheet}

Let $W=(W_t:t\in[0,1])$ be a standard Brownian motion on the unit interval. Then
$$
\EE\int_0^1 \frac{W_t^2}{t^2}\,\dint t=\int_0^1 \frac{\EE W_t^2}{t^2}\,\dint t=\int_0^1\frac{1}{t}\,\dint t=\infty \,,
$$
and Jeulin's Lemma (see \cite{PeccatiYor}) implies that $\int_0^1 \frac{W_t^2}{t^2}\,\dint t=\infty$ with probability one. However, for any $n\geq 2$ the functional
\begin{equation}\label{eq:DefG_n}
G_n:=\int_{1/n}^1 \frac{W_t^2}{t^2}\,\dint t
\end{equation}
has mean $\EE[G_n]=\log n$ and is finite with probability one. It is thus natural to describe the rate and type of `explosion' of the random integral, as $n\to\infty$. We emphasize that such explosive random integrals have some fundamental connections with the theory of enlargement of filtrations and to Brownian local times as further discussed in \cite{PeccatiYor}, see also \cite{Jeulin}.


More generally, we consider a similar family of random explosive integrals with respect to a standard Brownian sheet $\bW=(\bW(t_1,\ldots,t_d):(t_1,\ldots,t_d)\in[0,1]^d)$ on the $d$-dimensional unit cube $[0,1]^d$ for some fixed space dimension $d\geq 1$. Recall that $\bW$ is a centred Gaussian random field on $[0,1]^d$ with covariance
$$\EE[\bW(t_1,\ldots,t_d)\bW(s_1,\ldots,s_d)]=\prod_{i=1}^d\min(t_i,s_i)\,,\qquad t_1,\ldots,t_d,s_1,\ldots,s_d\in[0,1]\,.$$
In analogy to \eqref{eq:DefG_n} put, for $n\geq 2$,
$$
G_n^{(d)}:=\int_{[1/n,1]^d}\frac{\bW(t_1,\ldots,t_d)^2}{ t_1^2\cdots t_d^2}\,\dint(t_1,\ldots,t_d)
$$
so that $G_n^{(1)}$ reduces to $G_n$. Taking expectation and using Fubini's theorem yields that
\begin{align*}
\EE[G_n^{(d)}] &= \int_{[1/n,1]^d}\frac{\EE[\bW(t_1,\ldots,t_d)^2]}{ t_1^2\cdots t_d^2}\,\dint(t_1,\ldots,t_d) \\
&= \int_{[1/n,1]^d}\frac{1}{t_1\cdots t_d}\,\dint(t_1,\ldots,t_d)= \Big(\int_{1/n}^1 \frac{1}{t}\,\dint t\Big)^d = (\log n)^d\,.
\end{align*}
As shown in Section 3.2 in \cite{NualartPeccati}, $G_n^{(d)}-\EE[G_n^{(d)}]$ can be represented as $I_2^\bW(h_n^{(d)})$ with
$$
h_n^{(d)}(t_1,\ldots,t_d,s_1,\ldots,s_d)=\prod_{i=1}^d\big(\max(t_i,s_i,1/n)^{-1}-1)\big)\,,
$$
where $I_2^{\bW}$ stands for the double stochastic integral with respect to the Brownian sheet $\bW$. Hence, we have that
\begin{align*}
\Var G_n^{(d)} & = 2 \int_{([0,1]^{d})^2} h_n^{(d)}(t_1,\hdots,t_d,s_1,\hdots,s_d)^2 \, \dint(t_1,\hdots,t_d,s_1,\hdots,s_d)\\
& = 2\bigg(\int_0^1 \int_0^1 \big(\max\{t,s,1/n\}^{-1}-1\big)^2 \, \dint t \, \dint s \bigg)^d\\
& = 2\bigg( 2 \int_0^1 \int_0^s \big(\min\{1/t,1/s,n\}-1\big)^2 \, \dint t \, \dint s \bigg)^d\\
& = 2\bigg( 2 \int_{1/n}^1 s \Big(\frac{1}{s}-1\Big)^2\, \dint s + 2 \int_0^{1/n} s (n-1)^2 \, \dint s \bigg)^d\\
&= 2\bigg( 2 \log n -2\Big(1-\frac{1}{n}\Big)  \bigg)^d\,.
\end{align*}
We now define a normalized version of $G_n^{(d)}$ as
\begin{align}\label{eq:DefFnBrownianSheet}
F_n^{(d)}:=\frac{G_n^{(d)}-(\log n)^d}{(2\log n-2(1-1/n))^{d/2}}\,,\qquad n\geq 2\,.
\end{align}
This normalization ensures that $\EE[F_n^{(d)}]=0$ and $\Var(F_n^{(d)})= 2$.

By Proposition 8 in \cite{NualartPeccati} the random variables $(F_n^{(d)}:n\geq 2)$ satisfy a central limit theorem, as $n\to\infty$. Our theory developed in Section \ref{sec:Results} allows to add a moderate deviation principle as well as moderate and large deviation inequalities.

\begin{theorem}
Let $F_n^{(d)}$ be defined as at \eqref{eq:DefFnBrownianSheet}. Then the statements of Theorem \ref{thm:MDP} and Theorem \ref{thm:LDIs} are valid with $q=2$ and $\Delta_n=\frac{1}{8\sqrt{2}}\big(\frac{\log n}{120}\big)^{d/4}$, $n\geq 2$.
\end{theorem}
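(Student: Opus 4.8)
The plan is to realise $F_n^{(d)}$ as a normalised element of the second Wiener chaos and then to compute its single relevant contraction explicitly, exploiting the product structure of the kernel. First I would record that, by the variance computation preceding \eqref{eq:DefFnBrownianSheet}, one has $\|h_n^{(d)}\|_{\cH^{\otimes 2}}^2 = (2\log n - 2(1-1/n))^d =: C_n^d$, so that $F_n^{(d)} = I_2^{\bW}(g_n^{(d)})$ with $g_n^{(d)} := h_n^{(d)}/C_n^{d/2}$, $\|g_n^{(d)}\|_{\cH^{\otimes 2}} = 1$ and $\Var F_n^{(d)} = 2 = q!$ for $q=2$; moreover $g_n^{(d)}$ is symmetric (invariant under swapping the two $[0,1]^d$-blocks), hence lies in $\cH^{\odot 2}$, so Theorem \ref{thm:MDP} and Theorem \ref{thm:LDIs} apply. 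Since $q=2$, the only contraction entering \eqref{eq:defKn} is the one with $r=1$, and \eqref{eq:defAlphaq} gives $\alpha(2)=4/8=1/2$ together with $q^{3q/2}=2^3=8$, so $\Delta_n=(8\sqrt{K_n})^{-1}$ with $K_n=\|g_n^{(d)}\otimes_1 g_n^{(d)}\|_{\cH^{\otimes 2}}$.

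The decisive simplification comes from the factorisation of the kernel. Writing $\psi_n(t,s):=\max(t,s,1/n)^{-1}-1$ for the one-dimensional building block, we have $h_n^{(d)}(t_1,\dots,t_d,s_1,\dots,s_d)=\prod_{i=1}^d\psi_n(t_i,s_i)$. As the contraction $\otimes_1$ integrates out a single $A=[0,1]^d$-variable against the product (Lebesgue) measure on $A$, it factorises coordinatewise, $h_n^{(d)}\otimes_1 h_n^{(d)}=\bigotimes_{i=1}^d(\psi_n\otimes_1\psi_n)$, whence $\|h_n^{(d)}\otimes_1 h_n^{(d)}\|_{L^2([0,1]^{2d})}=\|\psi_n\otimes_1\psi_n\|_{L^2([0,1]^2)}^d$. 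Dividing by $\|h_n^{(d)}\|^2=C_n^d$ yields the clean identity $K_n=\big(\|\psi_n\otimes_1\psi_n\|_{L^2([0,1]^2)}/C_n\big)^d$, reducing everything to a one-dimensional estimate. The same factorisation applied to $\|h_n^{(d)}\|^2$ recovers $\|\psi_n\|_{L^2([0,1]^2)}^2=C_n$, a useful consistency check.

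Next I would estimate the one-dimensional contraction. Splitting $\psi_n\otimes_1\psi_n(a,b)=\int_0^1\psi_n(x,a)\psi_n(x,b)\,\dint x$ according to the ordering of $a$, $b$, $x$ and $1/n$ reduces it to elementary integrals of $1/x-1$ and $(1/x-1)^2$, after which $\|\psi_n\otimes_1\psi_n\|_{L^2}^2=\int_0^1\int_0^1(\psi_n\otimes_1\psi_n)(a,b)^2\,\dint a\,\dint b$ can be bounded. The target is a leading-order estimate of the form $\|\psi_n\otimes_1\psi_n\|_{L^2}^2\leq 120\,\log n$ for $n$ large, together with the trivial bound $C_n=2\log n-2(1-1/n)\geq\log n$ (valid once $\log n\geq 2(1-1/n)$). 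Combining them gives $C_n^2/\|\psi_n\otimes_1\psi_n\|_{L^2}^2\geq\log n/120$, hence $\Delta_n=\tfrac18\big(C_n^2/\|\psi_n\otimes_1\psi_n\|_{L^2}^2\big)^{d/4}\geq\tfrac18(\log n/120)^{d/4}\geq\tfrac1{8\sqrt2}(\log n/120)^{d/4}$; in particular $K_n\to 0$, so the fourth-moment regime hypothesis of both theorems is met.

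Finally, I would note that all three conclusions of Theorem \ref{thm:LDIs} and the MDP of Theorem \ref{thm:MDP} are monotone in $\Delta_n$: passing from the exact $\Delta_n$ to any smaller sequence $\tilde\Delta_n\leq\Delta_n$ only narrows the admissible range of $z$ and of scalings $a_n$ and weakens the bounds, so every assertion holds verbatim with $\tilde\Delta_n$ in place of $\Delta_n$ (for part (i) use $\Delta_n^{1/(q-1)}\geq\tilde\Delta_n^{1/(q-1)}$ in the denominator; for part (ii) use $(z\Delta_n)^{2/q}\geq(z\tilde\Delta_n)^{2/q}$; for the MDP use that $a_n/\tilde\Delta_n^{1/(q-1)}\to0$ forces $a_n/\Delta_n^{1/(q-1)}\to0$). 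Choosing $\tilde\Delta_n=\tfrac1{8\sqrt2}(\log n/120)^{d/4}$ then delivers the claimed statement. The main obstacle is the explicit one-dimensional integral of the third step: the three-fold maximum $\max(t,s,1/n)$ forces a careful case analysis of the integration domain, and one must keep track of which contributions are genuinely of order $\log n$ (as opposed to lower order, or the spurious order $(\log n)^2$ that would rule out the central limit theorem) in order to pin down a constant as clean as $120$.
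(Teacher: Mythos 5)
Your proposal follows essentially the same route as the paper: normalise the kernel, use the product structure of $h_n^{(d)}$ to reduce the single contraction $\otimes_1$ to the $d$th power of a one-dimensional quantity, estimate that quantity by a case analysis on $\max(t,s,1/n)$, and feed the resulting bound on $K_n$ into Theorem \ref{thm:MDP} and Theorem \ref{thm:LDIs}. Your explicit monotonicity remark (that replacing $\Delta_n$ by any smaller sequence only weakens every conclusion) is exactly the step the paper uses implicitly when it says one ``can choose'' the stated $\Delta_n$, and it is a welcome clarification.

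Two concrete points keep this from being a complete proof. First, the entire quantitative content --- the estimate $\|\psi_n\otimes_1\psi_n\|_{L^2([0,1]^2)}^2\leq c\,\log n$ with an explicit $c$ --- is announced as a ``target'' and flagged as the main obstacle rather than carried out; this is where all the work lies. It is achievable: bounding $\psi_n(x,z)\leq\min(1/x,1/z,n)$ and splitting into the cases $1/n\leq x\leq y$, $x\leq 1/n\leq y$ and $x\leq y\leq 1/n$, the paper obtains $\|\psi_n\otimes_1\psi_n\|_{L^2([0,1]^2)}^2\leq 4+24\log n\leq 30\log n$ for all $n\geq 2$. Second, your auxiliary bound $C_n=2\log n-2(1-1/n)\geq\log n$ fails for $2\leq n\leq 7$ (for instance $C_2\approx 0.386<\log 2\approx 0.693$), so your particular split of the constant $120$ into ``$120\log n$ for the contraction'' and ``$\log n$ for $C_n$'' does not deliver the claimed $\Delta_n$ for every $n\geq 2$, as the theorem requires. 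The paper's split --- $30\log n$ above and $C_n\geq(\log n)/2$ below, both valid for all $n\geq 2$ --- yields the same ratio $120/\log n$ and hence the stated constant uniformly; you should either adopt that split or restrict your version to $n\geq 8$ and treat the small cases separately.
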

\begin{proof}
In what follows, we compute an upper bound for
\begin{align*}
K_n & =\frac{2}{\Var G_n^{(d)}} \|h_n^{(d)}\otimes_1 h_n^{(d)}\|_{L^2(([0,1]^d)^2)}\\
 & =\frac{2}{\Var G_n^{(d)}} \bigg(\int_0^1 \int_0^1 \bigg(\int_0^1 h_n(x,z) h_n(y,z) \, \dint z \bigg)^2 \, \dint x \, \dint y \bigg)^{d/2}\,.
\end{align*}
For this, we make use of the estimate
$$
\int_0^1 h_n(x,z) h_n(y,z) \, \dint z \leq \int_0^1 \min(1/z,1/x,n)\, \min(1/z,1/y,n) \, \dint z=: g(x,y)\,,
$$
valid for all $x,y\in[0,1]$. We now consider the cases $1/n\leq x\leq y$, $x\leq 1/n\leq y$ and $x\leq y \leq 1/n$ separately and obtain that
\begin{align*}
g(x,y) & = \int_0^x \frac{1}{x y} \, \dint z + \int_x^y \frac{1}{zy} \, \dint z +  \int_y^1 \frac{1}{z^2} \, \dint z = \frac{1}{y}+\frac{\log y -\log x}{y}+\frac{1}{y}-1 \leq \frac{2}{y}+\frac{\log y -\log x}{y}\,,\\
g(x,y) & =\int_0^{1/n} \frac{n}{y} \, \dint z + \int_{1/n}^y \frac{1}{z y}\, \dint z + \int_y^1 \frac{1}{z^2} \, \dint z = \frac{1}{y}+\frac{\log y +\log n}{y}+\frac{1}{y}-1\leq \frac{2}{y}+\frac{\log y +\log n}{y}\,,\\
g(x,y) & = \int_0^{1/n} n^2 \, \dint z + \int_{1/n}^1 \frac{1}{z^2} \, \dint z = n+n-1\leq 2n\,,
\end{align*}
respectively. Together with the Cauchy-Schwarz inequality this implies that
\begin{align*}
\int_0^1 \int_0^1 g(x,y)^2 \, \dint x \, \dint y & \leq 2 \int_{1/n}^1 \int_{1/n}^y  \frac{8}{y^2} + \frac{2(\log y -\log x)^2}{y^2} \, \dint x\, \dint y\\
 & \quad + 2 \int_{1/n}^1 \int_0^{1/n}  \frac{8}{y^2} + \frac{2(\log y +\log n)^2}{y^2} \, \dint x\, \dint y+2 \int_0^{1/n} \int_0^y 4n^2 \, \dint x \, \dint y\\
 & \leq  2 \int_{1/n}^1 \int_0^{y}  \frac{8}{y^2} + \frac{2(\log y -\log x)^2}{y^2} \, \dint x\, \dint y+2 \int_0^{1/n} \int_0^y 4n^2 \, \dint x \, \dint y \,.
\end{align*}
Moreover, we have that
$$
\int_{1/n}^1 \int_0^y  \frac{8}{y^2} \, \dint x\, \dint y= \int_{1/n}^1  \frac{8}{y} \, \dint y = 8 \log n \quad \text{ and }\quad \int_0^{1/n} \int_0^y 4n^2 \, \dint x \, \dint y = 2
$$
as well as
\begin{align*}
\int_{1/n}^1 \int_0^y  \frac{2(\log y -\log x)^2}{y^2} \, \dint x\, \dint y 
  & = \int_{1/n}^1   \frac{4}{y} \, \dint y = 4\log n\,.
\end{align*}
Combining these estimates and using that $\frac{1}{2}\log n\leq 2\log n-2(1-1/n)$, we obtain that, for $n\geq 2$,
$$
K_n \leq 2\,\frac{(4 +24 \log n)^{d/2}}{(2\log n-2(1-1/n))^{d}} \leq 2\,\frac{(30 \log n)^{d/2}}{((\log n) /2)^d}\leq 2\,\bigg(\frac{120}{\log n}\bigg)^{d/2}
$$
and thus we can choose
$$
\Delta_n =\frac{1}{8\sqrt{2}}\Big(\frac{\log n}{120}\Big)^{d/4}\,,
$$
since $\alpha(2)=1/2$. The result then follows from Theorem \ref{thm:MDP} and Theorem \ref{thm:LDIs}.
\end{proof}

\section{Application to fractional Brownian motion}\label{sec:fBM}

We now present our second application of the results obtained in Section \ref{sec:Results} by considering a discretized version of the quadratic variation of a fractional Brownian motion. Recall that a fractional Brownian motion $B^H=(B_t^H:t\geq 0)$ with Hurst index $0<H<1$ is a continuous-time centred Gaussian process with covariance $$\EE[B_t^HB_s^H]=\frac{1}{2}\big(t^{2H}+s^{2H}-|t-s|^{2H}\big)\,,\qquad s,t\geq 0\,.$$ If $H=1/2$, then $B^H$ is the ordinary Brownian motion, while for $H>1/2$ the fractional Brownian motion is a commonly used model for long-range dependencies, see \cite{NourdinfBM} for details and background material. In practice, it is crucial to estimate $H$ from given data.  A well known estimator is based on the discretized quadratic variation of $B^H$ at scale $1/n$ on the interval $[0,1]$ and is defined as
$$S_n:=\sum_{k=0}^{n-1}(B_{\frac{k+1}{n}}^H-B_{\frac{k}{n}}^H)^2\,,\qquad n\geq 1\,.$$
From \cite[Equation (2.12)]{NourdinfBM} it is known that, as $n\to\infty$, the random variables $n^{2H-1}S_n$, $n\geq1$, converge in probability to $1$ so that a reasonable estimator $\widehat{H}_n$ for the Hurst index $H$ is given by $$\widehat{H}_n=\frac{1}{2}-\frac{\log S_n}{2\log n}\,, \qquad n\geq 2\,.$$
To investigate the asymptotic distributional behaviour of $\widehat{H}_n$, define a sequence $(F_n:n\geq 1)$ of centred and normalized versions of the discretized quadratic variation of $B^H$ by
\begin{equation}\label{eq:defFnFBM}
F_n:=\frac{n^{2H}}{\sigma_n}\sum_{k=0}^{n-1}\big[(B_{\frac{k+1}{n}}^H-B_{\frac{k}{n}}^H)^2-n^{-2H}\big]\,,\qquad n\geq 1\,,
\end{equation}
where we choose $\sigma_n$ in such a way that $\EE[F_n^2] = 2$ (this normalization is adapted to the set-up of Section \ref{sec:Results}). Now, a short computation reveals that
$$
\widehat{H}_n-H = -\frac{\log(\sigma_n F_n/n+1)}{2\log n}\,,
$$
which means that the behaviour of $F_n$ controls the error of the estimator $\widehat{H}_n$. For this reason $F_n$ is studied in the sequel.

We notice that $F_n$ has the same law as
\begin{equation}\label{eq:defFnFBMAlternative}
\frac{1}{\sigma_n}\sum_{k=0}^{n-1}\big[(B_{k+1}^H-B_k^H)^2-1\big]=\frac{1}{\sigma_n}\sum_{k=0}^{n-1}H_2(B_{k+1}^H-B_k^H)\,,
\end{equation}
where $H_2(x)=x^2-1$ is the second Hermite polynomial, explaining the alternative name second Hermite power variation for $F_n$. Asymptotic normality for $F_n$ together with rates of convergence for the total variation distance has been investigated in literature, see Theorem 6.3 in \cite{NourdinfBM} and Section 7.4 in \cite{NourdinPeccatiBook}. In fact, if $H>3/4$, then the sequence $(F_n:n\geq 1)$ does not satisfy a central limit theorem, while for $0<H\leq 3/4$ it holds that
$$
d_{TV}(F_n,N)=\sup_{B\subset\RR\ \text{Borel set}}\big|\PP(F_n\in B)-\PP(N\in B)\big|\leq A_n\,, \qquad n\geq 2\,,
$$
where $N$ is a centred Gaussian random variable with variance $2$ and $A_n$ is given by
\begin{align}\label{eq:fBMRate}
 A_n:=c_H\times\begin{cases}\frac{1}{\sqrt{n}} &: 0<H<5/8\\ \frac{(\log n)^{3/2}}{\sqrt{n}} &: H=5/8\\ \frac{1}{n^{3-4H}} &: 5/8<H<3/4\\ \frac{1}{\log n} &: H=3/4\end{cases}
\end{align}
with a constant $c_H>0$ only depending on $H$. As a consequence, one can show that for $0<H\leq 3/4$ both of the re-scaled random variables $\sqrt{n}(n^{2H-1}S_n-1)$ and $\sqrt{n}\log n\;(\widehat{H}_n-H)$ are, as $n\to\infty$, normally distributed with explicitly known limiting variances, cf.\ \cite[Chapter 6.4]{NourdinfBM}.

We are now going to study the normalized versions $F_n$ of the discretized quadratic variation functionals in more detail. For this, we will use the representation \eqref{eq:defFnFBMAlternative} for $F_n$. The connection between the random variables $F_n$ and the random elements living inside a Wiener chaos of fixed order is that, for each $n\geq 1$, $F_n$ can be represented as
$$
F_n=I_2^W(h_n)\qquad\text{with}\qquad h_n=\frac{1}{\sigma_n}\sum_{k=0}^{n-1}f_k\otimes f_k\,.
$$
Here, $I_2^W$ indicates the double stochastic integral with respect to a two-sided standard Brownian motion $W$ on $\RR$ and $(f_k:k\geq 1)$ is a certain sequence of square-integrable functions on $\RR$ such that $I_1^W(f_k)$ has the same distribution as $B_{k+1}^H-B_k^H$ for all $k$ (the precise form of the $f_k$'s is irrelevant for our purposes). This follows from the Mandelbrot-Van Ness representation of the fractional Brownian motion as stochastic integral with respect to the ordinary two-sided Brownian motion $W$, see
Proposition 2.3 in \cite{NourdinfBM}. Our next result shows that the sequence $(F_n:n\geq 1)$ of discretized and normalized quadratic variations of $B^H$ satisfies a MDP as well as certain moderate and large deviation inequalities.

\begin{theorem}\label{thm:fBM}
Suppose that $0<H\leq 3/4$ and let $F_n$ be as in \eqref{eq:defFnFBM}. Then the statements in Theorem \ref{thm:MDP} and Theorem \ref{thm:LDIs} hold with $q=2$ and $\Delta_n=2^{-9/4}A_n^{-1/2}$, where $A_n$ is given by \eqref{eq:fBMRate}.
\end{theorem}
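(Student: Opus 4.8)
The plan is to obtain the statement as a direct specialisation of Theorems \ref{thm:MDP} and \ref{thm:LDIs} to $q=2$, the only substantial input being an explicit upper bound for the contraction quantity $K_n$ in terms of the total variation rate $A_n$ of \eqref{eq:fBMRate}. First I would collect the structural facts. By the representation $F_n=I_2^W(h_n)$ the random variables live in the second Wiener chaos, so $q=2$; the normalisation $\EE[F_n^2]=2=q!$ (guaranteed by the choice of $\sigma_n$) matches the standing hypothesis $\|h_n\|_{\cH^{\otimes 2}}=1$; and for $q=2$ the constants entering the general results specialise to $\alpha(2)=\tfrac{2+2}{3\cdot 2+2}=\tfrac12$ and $q^{3q/2}=2^{3}=8$. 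Consequently $\Delta_n=(8\,K_n^{1/2})^{-1}$, with $K_n=\|h_n\otimes_1 h_n\|_{\cH^{\otimes 2}}$ the single relevant contraction norm, since $r$ ranges only over $\{1\}$.

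The core of the argument is to bound $K_n$. Working from the representation \eqref{eq:defFnFBMAlternative}, the increments $B_{k+1}^H-B_k^H$ are standardised Gaussian with autocovariance $\rho_H(j)=\tfrac12(|j+1|^{2H}+|j-1|^{2H}-2|j|^{2H})$, which behaves like $|j|^{2H-2}$ for large $|j|$; in particular $\rho_H\in\ell^2(\ZZ)$ precisely when $H<3/4$, explaining the threshold in the hypothesis. Following the contraction, equivalently fourth cumulant, computations of \cite[Chapter 7.4]{NourdinPeccatiBook} and \cite{NourdinfBM} that underlie the rate \eqref{eq:fBMRate}, one expresses $\|h_n\otimes_1 h_n\|_{\cH^{\otimes 2}}^2$ through weighted sums of products of the $\rho_H(j)$ and estimates these sums in the four regimes $H<5/8$, $H=5/8$, $5/8<H<3/4$ and $H=3/4$; the orders obtained are exactly $1/\sqrt n$, $(\log n)^{3/2}/\sqrt n$, $n^{-(3-4H)}$ and $1/\log n$, matching the four cases of $A_n$. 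Carrying the multiplicative constants through this computation, in particular the normalisation $\sigma_n$ and the combinatorial prefactors of the contraction/cumulant formula, yields the explicit inequality $K_n\le 2^{-3/2}A_n$.

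Granting $K_n\le 2^{-3/2}A_n$, the theorem follows at once. Indeed $\Delta_n=(8\,K_n^{1/2})^{-1}\ge 2^{-3}\,(2^{-3/2}A_n)^{-1/2}=2^{-9/4}A_n^{-1/2}$, so the value $\Delta_n=2^{-9/4}A_n^{-1/2}$ stated in the theorem is a legitimate lower bound for the parameter appearing in Theorems \ref{thm:MDP} and \ref{thm:LDIs}. Replacing the genuine $\Delta_n$ by this smaller value only shrinks the admissible range of scales $(a_n)$ in the MDP and only weakens the deviation inequalities, whose bounds are monotone in $\Delta_n$, so all conclusions remain valid with the stated $\Delta_n$. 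Alternatively one may route the argument through $L_n=\tfrac14\sqrt{\cum_4(F_n)}$ and part (iii) of Theorem \ref{thm:LDIs} together with Corollary \ref{cor:MDP}; for $q=2$ this produces the same factor $2^{-3/2}$ relating $L_n$ to $A_n$.

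The main obstacle is the middle step: faithfully reproducing the contraction estimates across the four Hurst regimes and, above all, controlling every numerical constant precisely enough to reach the clean factor $2^{-3/2}$ rather than merely the correct order of magnitude. The asymptotic orders themselves are classical, so the real care lies in the bookkeeping of the combinatorial prefactors and of the normalisation $\sigma_n$.
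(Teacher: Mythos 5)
Your proposal is correct and follows essentially the same route as the paper: specialise Theorems \ref{thm:MDP} and \ref{thm:LDIs} to $q=2$ with $\alpha(2)=1/2$ and $q^{3q/2}=8$, reduce everything to the single contraction bound $K_n\le \frac{1}{2\sqrt 2}A_n$, and conclude via monotonicity in $\Delta_n$. The only difference is that the paper does not redo the contraction estimates across the four Hurst regimes but simply quotes this bound from the proof of Theorem 6.3 in \cite{NourdinfBM}, so the constant bookkeeping you flag as the main obstacle is outsourced to that reference.
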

\begin{proof}
Since $q=2$ in the language of Section \ref{sec:Results}, we have that $K_n$ defined at \eqref{eq:defKn} is just the norm of a single contraction, namely $$
K_n=\|h_n\otimes_1h_n\|_{L^2(\RR^2)}:=\sqrt{\int_{\RR^2}(h_n\otimes_1 h_n)(x,y)^2\,\dint(x,y)}\,,
$$
where the integration is with repspect to the Lebesgue measure on $\RR^2$.
In the proof of Theorem 6.3 in \cite{NourdinfBM} it has been shown that $\|h_n\otimes_1h_n\|_{L^2(\RR^2)}$ and hence $K_n$ can be estimated from above by
$$
K_n\leq \frac{1}{2\sqrt{2}}\,A_n
$$
with $A_n$ defined at \eqref{eq:fBMRate}. Since $\alpha(2)=1/2$, the choice
$\Delta_n=2^{-9/4}A_n^{-1/2}$ as well as Theorem \ref{thm:MDP} and Theorem \ref{thm:LDIs} yield the result.
\end{proof}

\begin{remark}\rm
One can more generally consider the higher-order Hermite power variations defined for $q\geq 3$ as
$$
F_n^{(q)}:=\frac{1}{\sigma_n^{(q)}}\sum_{k=0}^{n-1}H_q(B_{k+1}^H-B_k^H)\,,\qquad n\geq 1\,,
$$
where $H_q$ is the $q$th Hermite polynomial and where $\sigma_n^{(q)}$ is such that $\EE[F_n^{(q)}]=q!$ for all $n\geq 1$. These functionals can be represented as elements of the $q$th Wiener chaos and an estimate for $K_n$ can in this case be deduced from Theorem 1.2 in \cite{BretonNourdin} and Theorem 4.1 in \cite{NourdinPeccati09}, see also Exercise 7.5.1 in \cite{NourdinPeccatiBook}. Since the results for $q=2$ and $q\geq 3$ have different structures, we decided to restrict to the first case.
\end{remark}

\section{Application to spherical Gaussian random fields}\label{sec:RandomFields}

In this section we present another application of Theorem \ref{thm:MDP} and Theorem \ref{thm:LDIs} by considering the sample bispectrum of spherical random fields. These objects have recently found considerable attention especially in astrophysics, cosmology, medical imaging and geophysics, and we refer to the monograph \cite{MarinucciPeccati} for further details on this subject. In order to simplify comparison with the existing literature, we adopt the notation from \cite{MarinucciPeccati}. Let $T=(T(x):x\in\SS^2)$ be a centred, isotropic random field on the two-dimensional unit sphere $\SS^2$ having finite moments up to order three. Later we additionally assume that $T$ is Gaussian. According to \cite[Theorem 5.13]{MarinucciPeccati}, for each $x\in\SS^2$, $T(x)$ admits the harmonic representation
\begin{equation}\label{eq:HarmonicRepresentation}
T(x)=\sum_{\ell=0}^\infty\sum_{m=-\ell}^\ell a_{\ell,m}\,Y_{\ell,m}(x)\,,
\end{equation}
where $(Y_{\ell,m}:\ell\geq 0,\,-\ell\leq m\leq\ell)$ are the spherical harmonics and $(a_{\ell,m}:\ell\geq 0,\,-\ell\leq m\leq\ell)$ is an array of random coefficients determined by the random field $T$. Abbreviating the inner sum in the above representation by $T_\ell(x)$ it holds that $\EE[T_\ell(x)^2]=C_\ell\,\frac{2\ell+1}{4\pi}$, independently of $x\in\SS^2$, and the sequence $(C_\ell:\ell\geq 0)$ is called the angular power spectrum of $T$ (see Proposition 6.6 and Equation (6.21) in \cite{MarinucciPeccati}). If $T$ is a Gaussian random field, the angular power spectrum completely captures the dependence structure of $T$. In the non-Gaussian case, this structure becomes more involved and an analysis of higher-order angular power spectra is necessary. As a third-order characteristic, one can consider the integrals
$$
\int_{\SS^2} \EE[T_{\ell_1}(x) T_{\ell_2}(x) T_{\ell_3}(x)] \, \dint x\,, \qquad \ell_1,\ell_2,\ell_3\geq 0\,.
$$
After evaluating these expressions and re-scaling in a suitable way, one obtains the angular (average) power bispectrum $(B_{\ell_1,\ell_2,\ell_3}:\ell_1,\ell_2,\ell_3\geq 0)$, which is given by
$$
B_{\ell_1,\ell_2,\ell_3}:=\sum_{m_1=-\ell_1}^{\ell_1}\sum_{m_2=-\ell_2}^{\ell_2}\sum_{m_3=-\ell_3}^{\ell_3}\left(\begin{matrix}\ell_1 & \ell_2 & \ell_3\\ m_1 & m_2 & m_3\end{matrix}\right)\EE[a_{\ell_1,m_1}a_{\ell_2,m_2}a_{\ell_3,m_3}]\,,\quad \ell_1,\ell_2,\ell_3\geq 0\,.
$$
Here $\left(\begin{matrix}\ell_1 & \ell_2 & \ell_3\\ m_1 & m_2 & m_3\end{matrix}\right)$ is a combinatorial coefficient only depending on $\ell_1,\ell_2,\ell_3$ and $m_1,m_2,m_3$, the so-called Wigner $3j$-coefficient for which we refer to \cite[Chapter 3.5.3]{MarinucciPeccati}. We remark that these coefficients are closely related to the Clebsch-Gordan coefficients, a commonly used tool in the representation theory of compact Lie groups. The Wigner $3j$-coefficients vanish unless $m_1+m_2+m_3= 0$ and $|\ell_i-\ell_j|\leq \ell_k \leq \ell_i+\ell_j$ for all $i,j,k\in\{1,2,3\}$ (see \cite[Proposition 3.44]{MarinucciPeccati}).

In the following let $\ell_1,\ell_2,\ell_3\geq 0$ be such that $\ell_1+\ell_2+\ell_3$ is even and $|\ell_i-\ell_j|\leq \ell_k \leq \ell_i+\ell_j$ for all $i,j,k\in\{1,2,3\}$. A high-frequency, unbiased, minimum mean square error estimator for the angular power bispectrum is given by
\begin{equation}\label{eq:widehatB}
\widehat{B}_{\ell_1,\ell_2,\ell_3}:=\sum_{m_1=-\ell_1}^{\ell_1}\sum_{m_2=-\ell_2}^{\ell_2}\sum_{m_3=-\ell_3}^{\ell_3}
\left(\begin{matrix}\ell_1 & \ell_2 & \ell_3\\ m_1 & m_2 & m_3\end{matrix}\right)\,a_{\ell_1,m_1}a_{\ell_2,m_2}a_{\ell_3,m_3}\,,
\end{equation}
where $a_{\ell_1,m_1},a_{\ell_2,m_2},a_{\ell_3,m_3}$ are the observed coefficients in the harmonic representation \eqref{eq:HarmonicRepresentation} of the given realization of $T$. We also define its normalized version $S_{\ell_1,\ell_2,\ell_3}:=\widehat{B}_{\ell_1,\ell_2,\ell_3}/\sqrt{C_{\ell_1}C_{\ell_2}C_{\ell_3}}$, cf.\ \cite[Chapter 9.2.2]{MarinucciPeccati}. The estimator $\widehat{B}_{\ell_1,\ell_2,\ell_3}$ is known as the sample bispectrum of $T$, and we refer to $S_{\ell_1,\ell_2,\ell_3}$ as the re-scaled sample bispectrum. Because of the symmetry we assume without loss of generality that $\ell_1\leq\ell_2\leq\ell_3$.

An important problem in the statistical investigation of spherical random fields is to test for (non-) Gaussianity. Since the sample bispectrum is a prominent test statistic, we assume from now on that $T$ is Gaussian and study the behaviour of $S_{\ell_1,\ell_2,\ell_3}$ under this assumption.

By Lemma 9.6 in \cite{MarinucciPeccati}, the re-scaled sample bispectrum is an element of the third Wiener chaos associated with the underlying Gaussian random field $T$, which we can assume to be generated by a standard Brownian motion on the unit interval. In particular, $\EE[S_{\ell_1,\ell_2,\ell_3}]=0$ and $\EE[S_{\ell_1,\ell_2,\ell_3}^2]=D_{\ell_1,\ell_2,\ell_3}$ with $D_{\ell_1,\ell_2,\ell_3}:=1+{\bf 1}\{\ell_1=\ell_2\}+{\bf 1}\{\ell_2=\ell_3\}+3{\bf 1}\{\ell_1=\ell_3\}$ (see \cite[Theorem 9.7]{MarinucciPeccati}). For our asymptotic investigations let $(u_n:n\geq 1)$ and $(v_n:n\geq 1)$ be non-negative integer-valued sequences such that $n\leq u_n \leq v_n\leq 2n$ and suppose that $n+u_n+v_n$ is even for all $n\geq 1$. We recall from \cite[Theorem 9.9]{MarinucciPeccati} that, as $n\to\infty$, the normalized sample bispectrum of $T$ is asymptotically normal and that
$$
d_{TV}\left(\sqrt{\frac{6}{D_{n,u_n,v_n}}}\,S_{n,u_n,v_n},N\right)\leq \sqrt{\frac{32}{3n}}\,, \qquad n\geq 1\,,
$$
where $N$ is a Gaussian random variable with variance $3!=6$. Using the theory developed in Section \ref{sec:Results}, we can add a moderate deviation principle as well as certain moderate and large deviation estimates for the normalized sample bispectrum.

\begin{theorem}
Let $(u_n:n\geq1)$ and $(v_n:n\geq 1)$ be non-negative integer-valued sequences such that $n\leq u_n\leq v_n\leq 2n$ and $n+u_n+v_n$ is even for all $n\geq 1$ and let $F_n:=\sqrt{\frac{6}{D_{n,u_n,v_n}}}S_{n,u_n,v_n}$. Then the statements of Theorem \ref{thm:MDP} and Theorem \ref{thm:LDIs} hold with $q=3$ and $\Delta_n= 3^{-9/2}(\sqrt{3n}/2)^{5/12}$, $n\geq 1$.
\end{theorem}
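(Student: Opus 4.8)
The plan is to exhibit $F_n$ as a normalized element of a fixed Wiener chaos and then to apply the fourth-cumulant versions of our main results, namely Corollary \ref{cor:MDP} and part (iii) of Theorem \ref{thm:LDIs}. These are precisely the variants tailored to applications in which the fourth cumulant, rather than the individual contraction norms, is the accessible quantity, which is why this section was flagged earlier as their natural example. By Lemma 9.6 in \cite{MarinucciPeccati} the re-scaled sample bispectrum, and hence $F_n=\sqrt{6/D_{n,u_n,v_n}}\,S_{n,u_n,v_n}$, lies in the third Wiener chaos, so we are in the setting of Section \ref{sec:Results} with $q=3$; the chosen normalization yields $\EE[F_n]=0$ and $\Var F_n=6=3!$, matching the standing assumption $\|h_n\|_{\cH^{\otimes 3}}=1$.

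First I would fix the constants attached to $q=3$. Since $q$ is odd, \eqref{eq:defAlphaq} gives $\alpha(3)=(9-3-1)/(3(9-5))=5/12$, and $q^{3q/2}=3^{9/2}$. Hence the quantity steering both theorems is $\Delta_n=(3^{9/2}L_n^{5/12})^{-1}=3^{-9/2}L_n^{-5/12}$, where $L_n=(q\,q!)^{-1}\sqrt{\cum_4(F_n)}=\tfrac{1}{18}\sqrt{\cum_4(F_n)}$ as defined in \eqref{eq:DefinitionLn}.

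The core of the argument is an upper bound on the fourth cumulant, and here the combinatorics of the Wigner $3j$-coefficients enters. I would extract from the proof of Theorem 9.9 in \cite{MarinucciPeccati} the estimate $\cum_4(F_n)\le 432/n$. This is precisely the cumulant bound underlying the total variation estimate $d_{TV}(F_n,N)\le\sqrt{32/(3n)}$ recalled above: for an element of the third chaos with variance $6$ the quantitative fourth moment theorem yields $d_{TV}(F_n,N)\le\tfrac{\sqrt2}{9}\sqrt{\cum_4(F_n)}$, and inserting $\cum_4(F_n)=432/n$ reproduces $\sqrt{32/(3n)}$ exactly, which serves as a consistency check. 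Granting $\cum_4(F_n)\le 432/n$ and using $\sqrt{432}=12\sqrt3$, one obtains $L_n=\tfrac{1}{18}\sqrt{\cum_4(F_n)}\le\tfrac{1}{18}\cdot\tfrac{12\sqrt3}{\sqrt n}=2/\sqrt{3n}$.

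It then remains only to assemble $\Delta_n$ and quote the general theory. As the exponent $-5/12$ is negative, $L_n\le 2/\sqrt{3n}$ forces $\Delta_n=3^{-9/2}L_n^{-5/12}\ge 3^{-9/2}(\sqrt{3n}/2)^{5/12}$, so I would take the admissible smaller value $\Delta_n=3^{-9/2}(\sqrt{3n}/2)^{5/12}$; replacing the true $\Delta_n$ by this lower bound only shrinks the range of scales in the MDP and enlarges the right-hand sides of the deviation inequalities, so the conclusions of Corollary \ref{cor:MDP} and Theorem \ref{thm:LDIs}(iii) persist verbatim for this choice, which is the assertion of the theorem. The main obstacle is, as anticipated, the fourth-cumulant estimate: turning the contraction-level analysis of \cite{MarinucciPeccati} into the sharp constant $432$ requires controlling $\|h_n\otimes_r h_n\|$ for $r\in\{1,2\}$ through sums of products of $3j$-symbols, invoking their orthogonality relations together with the frequency constraints $n\le u_n\le v_n\le 2n$; once that bound is secured the rest is the bookkeeping of constants sketched above.
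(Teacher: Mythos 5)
Your proposal is correct and follows essentially the same route as the paper: represent $F_n$ as $I_3^W(h_{n,u_n,v_n})$, import the bound $\cum_4(F_n)\leq 432/n$ from Marinucci--Peccati (the paper attributes it to the proof of their Theorem 9.7 rather than 9.9, a negligible difference), deduce $L_n\leq 2/\sqrt{3n}$, and invoke Corollary \ref{cor:MDP} and Theorem \ref{thm:LDIs}(iii) with $\alpha(3)=5/12$. Your added consistency check against the total variation rate and the explicit remark that replacing $\Delta_n$ by a lower bound only weakens the conclusions are both sound and go slightly beyond what the paper spells out.
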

\begin{proof}
Since the normalized sample bispectrum is an element of the third Wiener chaos, there exists a sequence $(h_{n,u_n,v_n}:n\geq 1)$ of square-integrable and symmetric functions on $[0,1]^3$ (supplied with the Lebesgue measure) such that $\|h_{n,u_n,v_n}\|_{L^2([0,1]^3)}=1$ and
$$
{\sqrt{\frac{6}{D_{n,u_n,v_n}}}}\,S_{n,u_n,v_n}=I_3^W(h_{n,u_n,v_n})
$$
for all $n\geq 1$, where $I_3^W$ stands for a multiple stochastic integral of order three with respect to a standard Brownian motion $W$ on $[0,1]$. It has been shown in the proof of Theorem 9.7 in \cite{MarinucciPeccati} that the fourth cumulant $\cum_4(I_3^W(h_{n,u_n,v_n}))$ of $I_3^W(h_{n,u_n,v_n})$ is bounded from above by $432/n$. Consequently, recalling the definition \eqref{eq:DefinitionLn}, we find that
$$
L_n=\frac{\sqrt{\cum_4(I_3^W(h_{n,u_n,v_n}))}}{3\cdot 3!}\leq \frac{1}{3\cdot 3!}\sqrt{\frac{432}{n}}=\frac{2}{\sqrt{3n}}\,,
$$
and hence the conditions of Corollary \ref{cor:MDP} and Theorem \ref{thm:LDIs} (iii) are satisfied with $q=3$ and
$$
\Delta_n = 3^{-9/2}\bigg(\frac{\sqrt{3n}}{2}\bigg)^{5/12}\,,$$
since $\alpha(3)=5/12$. This proves the claim.
\end{proof}

\section{Proofs of the main results}\label{sec:Proofs}

The next lemma is our main device to prove Theorem \ref{thm:MDP} and Theorem \ref{thm:LDIs}. It summarizes a moderate deviation principle and fine probability estimates, which are available under certain bounds on cumulants. This approach goes back to the `Lithuanian school of probability', and we refer especially to the monograph \cite{SaulisBuch}.

\begin{lemma}\label{lem:CumulantBoundImplyMDPandConcentration}
Let $(X_n:n\geq 1)$ be a sequence of real-valued random variables such that $\EE[X_n]=0$, $\EE[X_n^2]=\sigma^2\geq 1$ and $\EE[|X_n|^m]<\infty$ for all $m\geq 1$. Suppose that there is a constant $\gamma\geq 0$ such that the cumulants of $X_n$ satisfy
\begin{equation}\label{eq:CumumantBound}
|\cum_m(X_n)|\leq \frac{(m!)^{1+\gamma}}{\Delta_n^{m-2}}\qquad\text{for all}\quad m\geq 3
\end{equation}
with $\Delta_n>0$ for $n\geq 1$.
\begin{itemize}
\item[(a)]  Let $(a_n:n\geq 1)$ be a real sequence such that $$\lim_{n\to\infty}a_n=\infty\qquad{\rm and}\qquad\lim_{n\to\infty}{a_n/\Delta_n^{1/(1+2\gamma)}}=0\,.$$ Then the sequence of re-scaled random variables $(a_n^{-1}X_n:n\geq 1)$ satisfies a MDP with speed $a_n^2$ and Gaussian rate function $\cI(z)=z^2/(2\sigma^2)$.
\item[(b)] There exist constants $c_0,c_1,c_2>0$ only depending on $\gamma$ such that for $\D_n\geq c_0$ and $0\leq z\leq c_1\sigma\D_n^{1/(1+2\g)}$,
$$
\bigg|\log\frac{\PP(X_n\geq z)}{1-\Phi_{\sigma^2}(z)}\bigg|\leq c_2 \frac{1+(z/\sigma)^3}{\D_n^{1/(1+2\g)}}\qquad{\rm and}\qquad\bigg|\log\frac{\PP(X_n\leq -z)}{\Phi_{\sigma^2}(-z)}\bigg|\leq c_2 \frac{1+(z/\sigma)^3}{\D_n^{1/(1+2\g)}}\,,
$$
where $\Phi_{\sigma^2}$ is the distribution function of  a centred Gaussian random variable with variance $\sigma^2$.
\item[(c)] One has that
$$
\PP(|X_n|\geq z)\leq 2\exp\left(-\frac{1}{4}\min\Big\{\frac{z^2}{2^{1+\gamma}},\,(z\D_n)^{1/(1+\g)}\Big\}\right)
$$
for all $z\geq 0$.
\end{itemize}
\end{lemma}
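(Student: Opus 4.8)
The plan is to read condition \eqref{eq:CumumantBound} as precisely the cumulant hypothesis underlying the large deviation machinery of the `Lithuanian school', so that each of the three assertions follows by quoting an existing result and translating its parameters; the analytic heart (Cram\'er-type expansions and Bernstein-type cumulant inequalities) is imported wholesale from \cite{SaulisBuch}, \cite{EichelsbacherDoering}, \cite{EichelsbacherSchreiberRaic} and \cite{BentkusRudzkis}, and the only genuinely new step is checking that the hypotheses line up. A convenient reduction, especially for parts (a) and (b), is to pass to the standardized variables $Y_n:=X_n/\sigma$, which have mean zero and unit variance. Because $\sigma\geq 1$ we have $\sigma^m\geq 1$ for every $m\geq 3$, so that $|\cum_m(Y_n)|=|\cum_m(X_n)|/\sigma^m\leq (m!)^{1+\gamma}/\Delta_n^{m-2}$; thus $Y_n$ inherits \eqref{eq:CumumantBound} with the \emph{same} parameters $\gamma$ and $\Delta_n$. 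This is the sole role of the assumption $\sigma^2\geq 1$: it ensures that standardizing does not deteriorate the cumulant parameter $\Delta_n$. Statements about $X_n$ are then recovered from those about $Y_n$ via $\PP(X_n\geq z)=\PP(Y_n\geq z/\sigma)$ and $1-\Phi_{\sigma^2}(z)=1-\Phi_1(z/\sigma)$.

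For part (a) I would invoke the transfer principle of D\"oring and Eichelsbacher \cite{EichelsbacherDoering}: their result asserts that a sequence satisfying \eqref{eq:CumumantBound} with parameters $\gamma$ and $\Delta_n$ obeys, after rescaling by any $a_n$ with $a_n\to\infty$ and $a_n/\Delta_n^{1/(1+2\gamma)}\to 0$, a MDP with speed $a_n^2$ and rate function $z^2/2$ for $Y_n$. Undoing the standardization (the contraction principle applied to multiplication by $\sigma$) turns the rate into $z^2/(2\sigma^2)$, which is the claim; the admissible scale exponent $1/(1+2\gamma)$ is exactly the one delivered by their theorem, so nothing beyond matching notation is required here.

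For part (b) I would apply the version of the Rudzkis--Saulis--Statulevi\v{c}ius `lemma' recorded in \cite[Corollary 3.2]{EichelsbacherSchreiberRaic}. Under the standardized cumulant bound it yields, for $0\leq x\leq c_1\Delta_n^{1/(1+2\gamma)}$, the ratio estimate $|\log(\PP(Y_n\geq x)/(1-\Phi_1(x)))|\leq c_2(1+x^3)/\Delta_n^{1/(1+2\gamma)}$ together with its left-tail analogue. Substituting $x=z/\sigma$ turns the admissible range into $0\leq z\leq c_1\sigma\Delta_n^{1/(1+2\gamma)}$ and the right-hand side into $c_2(1+(z/\sigma)^3)/\Delta_n^{1/(1+2\gamma)}$, reproducing exactly the two inequalities of part (b). Part (c) I would obtain from the Bentkus--Rudzkis inequality \cite{BentkusRudzkis} in the simplified form of the corollary to Lemma 2.4 in \cite{SaulisBuch}, applied directly to $X_n$: from the same cumulant hypothesis it produces the two-regime bound with Gaussian threshold $2^{1+\gamma}$ in the central zone and stretched-exponential exponent $1/(1+\gamma)$ in the far tail, and the constants $\tfrac14$, $2^{1+\gamma}$ and $(z\Delta_n)^{1/(1+\gamma)}$ are read off directly from the cited statement.

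Since the probabilistic substance of each assertion is imported, the real labour is careful bookkeeping: confirming that \eqref{eq:CumumantBound} is literally the hypothesis each source demands, tracking how the standardization and the normalization $\sigma^2\geq 1$ interact with the various constants, and verifying that the two distinct exponents, $1/(1+2\gamma)$ in parts (a) and (b) versus $1/(1+\gamma)$ in part (c), are reproduced faithfully. I expect the main obstacle to be the constant tracking in part (c): there one must ensure that the central-regime threshold is the $2^{1+\gamma}$ coming from the $m=2$ value of the right-hand side of \eqref{eq:CumumantBound} rather than the actual variance $\sigma^2$, and that applying the inequality directly to $X_n$ (instead of to $Y_n$) is what yields the clean stated form. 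All remaining steps are routine substitutions.
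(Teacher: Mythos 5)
Your proposal matches the paper's proof essentially verbatim: both standardize to $\widetilde{X}_n=X_n/\sigma$, use $\sigma^2\geq 1$ to preserve the cumulant bound \eqref{eq:CumumantBound}, and then quote \cite[Theorem 1.1]{EichelsbacherDoering} for (a), \cite[Corollary 3.2]{EichelsbacherSchreiberRaic} for (b), and the Corollary after Lemma 2.4 in \cite{SaulisBuch} with $H=2^{1+\gamma}$ for (c). The bookkeeping you flag (the two exponents $1/(1+2\gamma)$ versus $1/(1+\gamma)$, and the origin of the threshold $2^{1+\gamma}$) is exactly what the paper's proof relies on, so there is nothing further to add.
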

\begin{proof}
Define $\widetilde{X}_n:=X_n/\sigma$ and observe that by $\sigma^2\geq 1$ and \eqref{eq:CumumantBound},
$$
|\cum_m(\widetilde{X}_n)| = \sigma^{-m}\,|\cum_m(X_n)| \leq |\cum_m(X_n)| \leq \frac{(m!)^{1+\gamma}}{\Delta_n^{m-2}}\,.
$$
Applying \cite[Theorem 1.1]{EichelsbacherDoering} and \cite[Corollary 3.2]{EichelsbacherSchreiberRaic} to $(\widetilde{X}_n:n\geq 1)$ yields part (a) and part (b). The assertion in (c) follows from the Corollary after Lemma 2.4 in \cite{SaulisBuch} with $H=2^{1+\gamma}$ there. As discussed in Remark \ref{rem:DiscussionWoher} above, the results in (b) and (c) are simplified versions of the `main lemmas' from \cite{RudzkisSaulisStatu} and \cite{BentkusRudzkis}, respectively, which are summarized in Chapter 2 of \cite{SaulisBuch}.
\end{proof}

\begin{remark}\label{rem:Weibull}\rm
\begin{itemize}
\item[(i)] To require the estimate \eqref{eq:CumumantBound} is a rather natural condition from the viewpoint of complex analysis. As discussed at the beginning of Chapter 2 in \cite{SaulisBuch}, the cumulant bound \eqref{eq:CumumantBound} with $\gamma=0$ implies analyticity of the cumulant generating functions $\log\EE[\exp(wX_n)]$, $w\in\CC$, of the random variables $(X_n:n\geq 1)$ in the discs $\{|w|\leq\Delta_n\}\subset\CC$. On the other hand, if the cumulant generating function is analytic in a disc around the origin with radius $\Delta_n$ and such that
    $$
    \sup\limits_{|w|=\Delta_n}\big|\log\EE[\exp(wX_n)]\big|\leq\Delta_n^2\,,
    $$
    then the random variable $X_n$ satisfies the cumulant bound \eqref{eq:CumumantBound} by Cauchy's integral formula for derivatives (recall \eqref{eq:DefCumulant}).

To allow for $\gamma>0$ takes into account a heavy tail behaviour and a super-exponential growth of cumulants (or moments) of the involved random variables. For example, if a random variable has density $x\mapsto \frac{\alpha}{2}|x|^{\alpha-1}e^{-|x|^\alpha}$ for some $\alpha>0$, then the $m$th moment is $\Gamma(1+m/\alpha)$ if $m$ is even and zero otherwise. Thus, Stirling's formula together with \cite[Lemma 3.1]{SaulisBuch} implies that the cumulant bound \eqref{eq:CumumantBound} is satisfied with $\gamma=\frac{1}{\alpha}-1$ if $0<\alpha\leq 1$ and $\gamma=0$ if $\alpha>1$. In view of the tail estimate \eqref{eq:Major} it is therefore not unexpected that we have $\gamma=\frac{q}{2}-1$ for random variables belonging to the $q$th Wiener chaos $\cC_q$, $q\geq 2$. We also refer to Remark \ref{rem:lowerBound} below for further discussion of this point.

\item[(ii)] Under condition \eqref{eq:CumumantBound} one also has the Berry-Esseen estimate
\begin{equation}\label{eq:BerryEsseen}
\sup_{x\in\RR}\big|\PP(X_n\leq x)-\Phi_{\sigma^2}(x)\big|\leq c\,\Delta_n^{-{1/(1+2\g)}}
\end{equation}
with a constant $c>0$ only depending on $\gamma$, see \cite[Corollary 2.1]{SaulisBuch}. In the context of Theorem \ref{thm:MDP} this leads to a rate of convergence of order $K_n^{\alpha(q)/(q-1)}$. To the best of our knowledge, this provides a first proof of the fourth moment theorem including rates of convergence without resorting to Stein's method. However, comparing this with the bound \eqref{eq:RateAusDemBuch} derived via the Malliavin-Stein method in \cite{NourdinPeccati09} (see also \cite[Chapter 5.2]{NourdinPeccatiBook}), we see that the rate of convergence via the method of cumulants is weaker for all $q\geq 2$. Moreover, the bound in \eqref{eq:RateAusDemBuch} is for the total variation distance, which is larger than the left-hand side of \eqref{eq:BerryEsseen}. For this reason, we do not pursue rates for the normal approximation further in this paper.
\end{itemize}
\end{remark}

Our strategy for the proof of Theorem \ref{thm:MDP} and Theorem \ref{thm:LDIs} is to establish for the random variable $F_n=I_q(h_n)$ the cumulant bound \eqref{eq:CumumantBound}. In what follows we assume without loss of generality that $\cH=L^2(A,\sA,\mu)=:L^2(A)$ with a Polish space $(A,\mathcal{A})$ and a non-atomic $\sigma$-finite measure $\mu$. This is possible because of isomorphy of Hilbert spaces. Recall that we denote by $L^2_s(A^n)$, $n\geq 1$, the subspace of $L^2(A^n)$ consisting of symmetric functions, i.e., functions which are invariant under permutation of their arguments. Moreover, the tensor product of two functions $f_1: A^{n_1}\to\RR$ and $f_2:A^{n_2}\to\RR$, $n_1,n_2\geq 1$, is a function $f_1\otimes f_2: A^{n_1+n_2}\to\RR$ given by
$$
f_1\otimes f_2(x_1,\hdots,x_{n_1+n_2})=f(x_1,\hdots,x_{n_1}) f(x_{n_1+1},\hdots,x_{n_1+n_2})\,.
$$

\medskip

For integers $\ell\geq 1$ and $n_1,\ldots,n_\ell\geq 1$ define $N_0:=0$ and, for $i\in\{1,\ldots,\ell\}$, put $N_i:=n_1+\ldots+n_i$ and $J_i:=\{N_{i-1}+1,\ldots,N_i\}$. By a partition $\sigma$ of $\{1,\ldots,N_\ell\}$ we understand a collection $\{B_1,\ldots,B_{|\sigma|}\}$ of non-empty and pairwise disjoint subsets $B_1,\ldots,B_{|\sigma|}\subset\{1,\ldots,N_\ell\}$ with $B_1\cup\ldots\cup B_{|\sigma|}=\{1,\ldots,N_\ell\}$. The sets $B_1,\ldots,B_{|\sigma|}$ are called blocks, and by $|\sigma|$ we denote the number of blocks of $\sigma$. Let $\Pi(n_1,\ldots,n_\ell)$ be the set of partitions $\sigma$ of $\{1,\ldots,N_\ell\}$ satisfying
\begin{itemize}
\item $|B\cap J_i|\leq 1$ for all $i\in\{1,\ldots,\ell\}$ and blocks $B\in\sigma$,
\item $|B|=2$ for all blocks $B$ of $\sigma$,
\item for all non-empty sets $M_1,M_2\subset\{1,\ldots,\ell\}$ with $M_1\cup M_2=\{1,\ldots,\ell\}$ there are a block $B\in\sigma$ and elements $i_1\in M_1$ and $i_2\in M_2$ such that $B\cap J_{i_1}\neq\emptyset$ and $B\cap J_{i_2}\neq\emptyset$.
\end{itemize}
For brevity, we also write $\Pi(q[m],q_1,\ldots,q_k)$ instead of $\Pi(q,\ldots,q,q_1,\ldots,q_k)$, where $q$ appears $m$ times. Note that $\Pi(n_1,\ldots,n_\ell)$ can be empty, in particular if $N_\ell$ is odd.

For a function $f:A^{N_\ell}\to\RR$ and a partition $\sigma\in\Pi(n_1,\ldots,n_\ell)$ we define $f_\sigma:A^{|\sigma|}\to\RR$ by replacing all arguments of $f$ with indices belonging to the same block of $\sigma$ by a new common variable. For example, if $f:A^4\to\RR$ and $\sigma=\{\{1,4\},\{2,3\}\}$, we have that $f_\sigma(y,z)=f(y,z,z,y)$. This notation allows us to recall from \cite[Corollary 7.3.1]{PeccatiTaqquBook} (see also Proposition 5.6 in \cite{NourdinPeccatiCumulants}) the following classical expression for the cumulants of a random variable of the type $I_q(h)$ with $h\in L_s^2(A^q)$.

\begin{lemma}\label{lem:Kumulanten}
For $q\geq 1$ and $h\in L_s^2(A^q)$,
$$\cum_m(I_q(h))=\sum_{\sigma\in\Pi(q[m])}\int_{A^{|\sigma|}}(h^{\otimes m})_\sigma\,\dint\mu^{|\sigma|}\,,\qquad m\geq 1\,,$$
where the sum on the right-hand side has to be interpreted as $0$ if $\Pi(q[m])=\emptyset$.
\end{lemma}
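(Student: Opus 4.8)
The plan is to establish the cumulant formula in Lemma~\ref{lem:Kumulanten} by going through the characteristic function of $I_q(h)$, or more precisely through the cumulant generating function, and then expanding it diagrammatically. First I would recall the product formula for multiple stochastic integrals, which expresses a product $I_q(h_1)\cdots I_q(h_m)$ as a sum of multiple integrals $I_j(\,\cdot\,)$ obtained by contracting pairs of the $m$ copies of the integrand. Iterating this, the $m$-fold product $I_q(h)^m$ expands as a sum over ways of pairing up the $q\cdot m$ total arguments, grouped by which arguments get identified and integrated out. Taking expectations kills every surviving multiple integral of positive order, so only the fully contracted term (the order-zero part) survives, leaving $\EE[I_q(h)^m]$ as a sum over complete pairings of the $qm$ arguments subject to the constraint that no two arguments from the same factor $J_i$ are paired.

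The key combinatorial step is then to pass from moments to cumulants. The cumulants are obtained from the moments by the usual Möbius-inversion over the partition lattice, and the diagram-theoretic statement is that after this inversion precisely the \emph{connected} pairing diagrams survive. This is exactly what the third defining condition of $\Pi(q[m])$ encodes: for every bipartition $M_1\cup M_2$ of the $m$ factors there must be a block straddling the two sides, which is the combinatorial rendering of connectedness of the associated multigraph on vertices $\{1,\dots,m\}$. The first two conditions (blocks of size exactly two, at most one element per $J_i$) encode that we are taking a perfect matching of the arguments into pairs with no pair internal to a single factor. I would cite the classical diagram formula from \cite[Corollary 7.3.1]{PeccatiTaqquBook}, since the connectedness-implies-cumulant principle is the standard bridge from moment diagrams to cumulant diagrams for elements of a fixed Wiener chaos; the notation $(h^{\otimes m})_\sigma$ precisely realizes the identification of paired variables, and integrating against $\mu^{|\sigma|}$ performs the contraction, giving the stated scalar.

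The main obstacle is bookkeeping rather than conceptual: one must verify carefully that the combinatorial conditions defining $\Pi(q[m])$ match exactly the set of connected complete-matching diagrams that arise, and that the multiplicities line up so no extra combinatorial factor appears in front of the sum. In particular the symmetry of $h$ (it lies in $L^2_s(A^q)$) is what makes the value of $\int_{A^{|\sigma|}}(h^{\otimes m})_\sigma\,\dint\mu^{|\sigma|}$ depend only on the partition $\sigma$ and not on a labeling of which argument inside $J_i$ is hit, so the count absorbs cleanly into the sum over $\sigma$. Since this is a verbatim restatement of a known result, I would present this essentially as a citation: invoke \cite[Corollary 7.3.1]{PeccatiTaqquBook} (or equivalently Proposition~5.6 in \cite{NourdinPeccatiCumulants}), noting only that our $\Pi(q[m])$ is their index set of connected Gaussian pairing diagrams and that the convention $\sum_{\emptyset}=0$ handles the case $qm$ odd, where no perfect matching exists.
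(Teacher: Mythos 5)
Your proposal is correct and matches the paper, which does not prove this lemma at all but simply recalls it as a classical result by citing \cite[Corollary 7.3.1]{PeccatiTaqquBook} and Proposition 5.6 in \cite{NourdinPeccatiCumulants}. Your sketch of the underlying diagram-formula argument (product formula, moments as sums over non-flat pairings, connectedness surviving the passage to cumulants) is an accurate account of how those cited results are established, so presenting the lemma as a citation, as you propose, is exactly what the paper does.
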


A crucial step in our proof of Theorem \ref{thm:MDP} and Theorem \ref{thm:LDIs} is to re-write the right-hand side of the cumulant expression provided in Lemma \ref{lem:Kumulanten} in terms of contractions. The underlying idea is to consider for a fixed partition $\sigma\in\Pi(q[m])$ the functions in the tensor product $h^{\otimes m}$ appearing on the right-hand side in Lemma \ref{lem:Kumulanten} as vertices of a suitable multigraph induced by $\sigma$ and to group them in an appropriate way according to a maximal matching.

More precisely, with $\sigma\in\Pi(q[m])$ we associate a multigraph $G_\sigma$ as follows. The set of vertices is $\{1,\ldots,m\}$, and for each block $B$ of $\sigma$ with $B=(B\cap J_i)\cup (B\cap J_j)$ we connect the vertices $i$ and $j$ by an edge. In particular, each vertex of $G_\sigma$ has degree $q$ (in other words this means that $G_\sigma$ is $q$-regular), and $G_\sigma$ is a connected multigraph without loops.

A matching of $G_\sigma$ is a set of non-adjacent edges of $G_\sigma$, and we denote by $M(G_\sigma)$ the maximal size of such a set. This so-called matching number of $G_\sigma$ is an important quantity considered in combinatorics. For the matching number of a multigraph $G_\sigma$ associated with a partition $\sigma\in\Pi(q[m])$ we have the following lower bound.

\begin{lemma}\label{lem:MatchingNumber}
For $q\geq 2$ and $m\geq 3$ suppose that $\Pi(q[m])\neq\emptyset$ and fix $\sigma\in\Pi(q[m])$. Then
\begin{align}\label{eq:LowerBoundLqm}
M(G_\sigma)\geq L(q,m) := \begin{cases} \Big\lceil \frac{(q^2-q-1)m-(q-1)}{q(3q-5)}\Big\rceil &: \text{$q$ odd}\\ \min\Big(\Big\lfloor\frac{m}{2}\Big\rfloor,\Big\lceil\frac{(q+2)m}{3q+2}\Big\rceil\Big) &: \text{$q$ even}\,.\end{cases}
\end{align}
\end{lemma}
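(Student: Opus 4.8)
The plan is to reduce the statement to an upper bound on the matching deficiency via the Berge--Tutte formula, which expresses the matching number of a multigraph $G$ on the vertex set $\{1,\dots,m\}$ as
$$M(G)=\frac12\Big(m-\max_{S\subseteq\{1,\dots,m\}}\big(o(G-S)-|S|\big)\Big),$$
where $o(G-S)$ is the number of connected components of $G-S$ having an odd number of vertices. Since $M(G_\sigma)\le\lfloor m/2\rfloor$ holds trivially, it suffices to bound the deficiency $d:=\max_S\big(o(G_\sigma-S)-|S|\big)$ from above; rearranging and using that $M(G_\sigma)$ is an integer then yields the claimed lower bound, the integrality being responsible for the $\lceil\,\cdot\,\rceil$ and the trivial estimate for the $\lfloor m/2\rfloor$ branch in \eqref{eq:LowerBoundLqm}. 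Throughout I would use that $G_\sigma$ is $q$-regular, connected and loopless.

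First I would dispose of $S=\emptyset$: as $G_\sigma$ is connected, $o(G_\sigma)\le 1$, so this choice is harmless. For $S\neq\emptyset$ I would fix $S$ with $|S|=s\ge1$ and, for each component $C$ of $G_\sigma-S$, record its size $n_C$ and the number $a_C$ of edges joining $C$ to $S$. Only odd components contribute to $o(G_\sigma-S)$, and even components merely consume vertices and edges, so one may reduce to the case where every component is odd. Counting edge-endpoints in $S$ gives the budget $\sum_C a_C\le qs$; connectedness of each component together with $q$-regularity gives $a_C\le(q-2)n_C+2$; $q$-regularity forces the parity relation $a_C\equiv qn_C\pmod 2$; and each odd component has $n_C\ge3$ unless it is an isolated vertex, in which case $n_C=1$ and $a_C=q$. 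Crucially, the connectedness of the whole multigraph $G_\sigma$ yields a global inequality: contracting every component of $G_\sigma-S$ to a single vertex produces a connected multigraph on $s+o(G_\sigma-S)$ vertices, hence with at least $s+o(G_\sigma-S)-1$ edges, which combined with the degree identity $2e(S)+\sum_C a_C=qs$ gives $\sum_C a_C\ge(2-q)s+2\,o(G_\sigma-S)-2$.

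With these constraints in hand, I would bound $o(G_\sigma-S)-|S|$ by solving the resulting (integer) linear program in $s$ and the component profile $(n_C,a_C)$, and here the two parities of $q$ genuinely diverge. For even $q$ the parity relation forces $a_C\ge2$ for every component meeting $S$, which makes the global inequality automatic; the extremal profile consists of size-$3$ components sending exactly two edges to $S$, and the optimization returns deficiency $(q-2)m/(3q+2)$, hence $M(G_\sigma)\ge(q+2)m/(3q+2)$. For odd $q$ the parity relation instead permits ``pendant'' odd components with $a_C=1$, which are the most economical for creating deficiency; the global connectedness inequality caps their number, and the extremal configuration must in addition contain isolated vertices (odd components of size $1$ with $a_C=q$) whose many edges to $S$ restore connectivity and thereby relax the cap. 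Balancing pendant triangles against isolated vertices is exactly the computation producing the coefficient $(q-1)(q-2)/\big(q(3q-5)\big)$, whence $M(G_\sigma)\ge\big((q^2-q-1)m-(q-1)\big)/\big(q(3q-5)\big)$ and the odd-case value of $L(q,m)$.

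I expect the main obstacle to lie entirely in the odd case, and within it in two points: establishing the global connectedness inequality and then exploiting it correctly, and justifying that the optimum of the linear program is attained at the mixture of pendant components and isolated vertices (equivalently, that no other component profile does better). The delicate interplay is between the edge budget $\sum_C a_C\le qs$ and the global lower bound on $\sum_C a_C$, mediated by the parity constraint, whose minimal admissible value of $a_C$ is $2$ when $q$ is even but $1$ when $q$ is odd; this single parity distinction is what forces the two different formulas. By contrast, once the constraints are set up I expect the even case, and the reductions (disposing of $S=\emptyset$ and of even components), to be routine.
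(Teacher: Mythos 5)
Your proposal takes a genuinely different route from the paper: the paper's entire proof is a citation of Theorems 1 and 2 of Nishizeki's article on maximum matchings of regular multigraphs (applied with $\lambda=2$) for $q\geq 3$, together with the observation that for $q=2$ the graph $G_\sigma$ is a cycle, for which \eqref{eq:LowerBoundLqm} is immediate. You are in effect re-proving Nishizeki's theorem from scratch via the Berge--Tutte deficiency formula, and your plan is sound: the constraints you isolate (edge budget $\sum_C a_C\leq qs$, the per-component bound $a_C\leq (q-2)n_C+2$ from connectedness, the parity $a_C\equiv qn_C\pmod 2$, and the global connectivity inequality $\sum_C a_C\geq (2-q)s+2\,o(G_\sigma-S)-2$) are exactly the right ones, and the linear program they define does return deficiency $(q-2)m/(3q+2)$ in the even case and $(q-1)\big((q-2)m+2\big)/\big(q(3q-5)\big)$ in the odd case, which after $M=(m-d)/2$ and integrality reproduce $L(q,m)$ precisely; your identification of the extremal profiles (size-$3$ components with $a_C=2$ for even $q$; a mixture of pendant size-$3$ components with $a_C=1$ and isolated vertices with $a_C=q$ for odd $q$) is also correct. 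What your approach buys is a self-contained proof; what the paper's buys is brevity. One point in your write-up needs correction: the case $S=\emptyset$ is \emph{not} harmless when $q$ is even, because then $m$ may be odd and $o(G_\sigma)=1$ forces deficiency at least $1$, so that $M(G_\sigma)\leq\lfloor m/2\rfloor$ is actually attained (e.g.\ an odd cycle for $q=2$, or a doubled triangle for $q=4$, $m=3$); this is precisely the source of the $\lfloor m/2\rfloor$ branch in the minimum, rather than a ``trivial estimate''. For odd $q$ the handshake lemma forces $m$ even, so there $S=\emptyset$ really is harmless, consistent with the absence of a minimum in the odd-$q$ formula. Beyond that, the remaining work is the execution of the integer program and the bookkeeping for even components, which you correctly flag as the crux but do not carry out.
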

\begin{proof}
From Theorem 1 (with $\lambda=2$ there) and Theorem 2 in \cite{Nishizeki} it follows that $M(G_\sigma)$ satisfies the inequality $M(G_\sigma) \geq L(q,m)$ for $q\geq 3$. If $q=2$, $G_\sigma$ is a cycle and hence also satisfies the estimate \eqref{eq:LowerBoundLqm}.
\end{proof}

The construction of the multigraph $G_\sigma$ described above and the lower bound on the matching number in Lemma \ref{lem:MatchingNumber} allow us to re-write the summands on the right-hand side of the cumulant expression in Lemma \ref{lem:Kumulanten} in the following way:

\begin{lemma}\label{lem:CrucialEstimate}
For $q\geq 2$ and $m\geq 3$ suppose that $\Pi(q[m])\neq\emptyset$ and fix $\sigma\in\Pi(q[m])$ and $h\in L_s^2(A^q)$. Then there are non-negative integers $m_1,m_2$ satisfying $m_1+2m_2=m$ and $L(q,m)\leq m_2\leq m/2$ and $r_i\in\{1,\ldots,q-1\}$ for $i\in\{1,\ldots,m_2\}$ as well as a partition $\widetilde{\sigma}\in\Pi(q[m_1],q-r_1,\ldots,q-r_{m_2})$ such that
$$\int_{A^{|\sigma|}}(h^{\otimes m})_\sigma\,\dint\mu^{|\sigma|} = \int_{A^{|\widetilde{\sigma}|}}\Big(h^{\otimes m_1}\otimes\bigotimes_{i=1}^{m_2}(h\otimes_{r_i}h)\Big)_{\widetilde{\sigma}}\,\dint\mu^{|\widetilde{\sigma}|}\,.$$
\end{lemma}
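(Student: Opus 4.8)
The plan is to read the integral $\int_{A^{|\sigma|}}(h^{\otimes m})_\sigma\,\dint\mu^{|\sigma|}$ as a contraction over the multigraph $G_\sigma$: each of the $m$ vertices carries a copy of $h$, and each edge (i.e.\ each block of $\sigma$) identifies one argument slot of the two incident copies and integrates the resulting common variable out. The idea is then to perform these integrations not all at once but grouped according to a maximum matching, carrying out first the contractions internal to each matched pair.

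First I would apply Lemma \ref{lem:MatchingNumber} to fix a maximum matching $\mathcal{M}=\{e_1,\dots,e_{m_2}\}$ of $G_\sigma$, so that $m_2=M(G_\sigma)$ and hence $L(q,m)\le m_2\le\lfloor m/2\rfloor\le m/2$; setting $m_1:=m-2m_2\ge 0$ then gives $m_1+2m_2=m$. Writing $e_i=\{a_i,b_i\}$, let $r_i$ denote the multiplicity of the edge $\{a_i,b_i\}$ in $G_\sigma$, that is, the number of blocks of $\sigma$ joining a slot of the $a_i$-th copy of $h$ to a slot of the $b_i$-th copy. Since $e_i$ is an edge we have $r_i\ge 1$, and since $G_\sigma$ is $q$-regular, connected and has $m\ge 3>2$ vertices, not all $q$ half-edges at $a_i$ can run to $b_i$ (otherwise $\{a_i,b_i\}$ together with the $q$ edges between them would be a connected component, contradicting connectedness); hence $r_i\le q-1$, so $r_i\in\{1,\dots,q-1\}$ as required.

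The key step is the grouping itself. As the edges of $\mathcal{M}$ are pairwise non-adjacent, the matched pairs are vertex-disjoint, so every copy of $h$ lies in at most one pair. For a fixed pair $\{a_i,b_i\}$ I would perform its $r_i$ internal integrations first: by the definition of the contraction operator, and by symmetry of $h$ (which makes the outcome independent of which $r_i$ slots are identified), these integrations turn the two copies into exactly $h\otimes_{r_i}h$, a function of the $2(q-r_i)$ surviving (``external'') slots. Doing this for every $i$ and leaving the $m_1$ unmatched copies untouched replaces the integrand by $h^{\otimes m_1}\otimes\bigotimes_{i=1}^{m_2}(h\otimes_{r_i}h)$; this rearrangement of a finite iterated integral is legitimate by Fubini, the relevant contractions of $L^2$ functions being themselves $L^2$. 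The blocks of $\sigma$ not consumed internally now pair up the external slots, and that pairing is the candidate partition $\widetilde\sigma$, so that both sides of the asserted identity evaluate the same contraction.

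It remains to verify that $\widetilde\sigma$ is admissible, i.e.\ lies in the relevant class $\Pi(\cdots)$, and this I expect to be the main obstacle. That all surviving blocks have size two is immediate, and the property $|B\cap J|\le 1$ holds because a surviving block could only join two slots of a single super-vertex if it were a loop of $G_\sigma$ (impossible) or an $a_i$–$b_i$ edge (all of which were already absorbed into $r_i$). The delicate requirement is the third, ``connectedness'' condition: for every splitting of the super-vertices into $M_1\cup M_2$ there must be a surviving block crossing it. I would obtain this from connectedness of $G_\sigma$ by noting that contracting each matched pair to a single node and discarding its internal edges yields a multigraph on the super-vertices whose edges are precisely the surviving blocks; contraction preserves connectedness, and a connected multigraph has an edge across every vertex bipartition. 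Verifying these three conditions establishes that $\widetilde\sigma$ is the desired partition and completes the argument.
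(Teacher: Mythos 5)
Your proof is correct and takes essentially the same route as the paper's: fix a maximum matching of $G_\sigma$, use Fubini to carry out first the integrations internal to each matched pair (turning each pair of copies of $h$ into $h\otimes_{r_i}h$, with $r_i<q$ forced by connectedness of $G_\sigma$), and package the remaining blocks as the partition $\widetilde{\sigma}$, with $m_2=M(G_\sigma)\geq L(q,m)$ supplied by the matching-number lemma. The only difference is that you make explicit what the paper leaves implicit, namely the verification that $\widetilde{\sigma}$ satisfies the three defining conditions of the partition class and the argument that $r_i\leq q-1$.
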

\begin{proof}
 We construct the multigraph $G_\sigma$ as described above and choose a matching of maximal cardinality. Then we split the set of variables of $(h^{\otimes m})_\sigma$ into two groups, those variables belonging to vertices (factors in the tensor product $h^{\otimes m}$) that are matched and the remaining variables. Now, we apply Fubini's theorem and integrate over the first group of variables. By this construction, $m_2=M(G_\sigma)$ pairs of functions $h$ are transformed into terms of the type $h\otimes_{r_i}h$ with $r_i\in\{1,\ldots,q-1\}$, $i=1,\ldots,m_2$. In fact, that $r_i<q$ follows since $G_\sigma$ is connected by construction. We write the integration with respect to the variables belonging to the second group in terms of a partition $\widetilde{\sigma}\in\Pi(q[m-2M(G_\sigma)],q-r_1,\ldots,q-r_{m_2})$ and set $m_1:=m-2M(G_\sigma)$. Now the observation from Lemma \ref{lem:MatchingNumber} that $m_2=M(G_\sigma)\geq L(q,m)$ concludes the proof.
\end{proof}

\begin{remark}\label{rem:SharpLqm}\rm
It follows from the examples in Section 3 in \cite{Nishizeki} that for sufficiently large $m$ there are partitions $\sigma$ such that inequality \eqref{eq:LowerBoundLqm} is sharp in that $M(G_\sigma)=L(q,m)$, and hence $m_2=L(q,m)$ and $m_1=m-2L(q,m)$ in Lemma \ref{lem:CrucialEstimate}.
\end{remark}

Finally, let us recall the following generalized Cauchy-Schwarz inequality from Lemma 4.1 in \cite{BiermeEtAl}.
\begin{lemma}\label{lem:GeneralizedCauchySchwarz}
Fix $\ell\geq 2$, $n_1,\ldots,n_\ell\geq 1$ such that $\Pi(n_1,\ldots,n_\ell)\neq\emptyset$, $f_i\in L^2_s(A^{n_i})$ for $i\in\{1,\ldots,\ell\}$ and $\sigma\in\Pi(n_1,\ldots,n_\ell)$. Then
$$\int_{A^{|\sigma|}}\Big(\bigotimes_{i=1}^\ell |f_i|\Big)_\sigma\,\dint\mu^{|\sigma|}\leq \prod_{i=1}^\ell \|f_i\|_{L^2(A^{n_i})}\,.$$
\end{lemma}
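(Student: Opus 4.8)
The plan is to prove the inequality by induction on the number $|\sigma|$ of blocks, peeling off one bundle of contracted variables at a time. First I would reinterpret the integrand in the language of the multigraph $G_\sigma$ already introduced above: since every block of $\sigma$ is a pair meeting two distinct groups $J_i$ and $J_j$, it corresponds to an edge joining the vertices $i$ and $j$ of $G_\sigma$, vertex $i$ has degree $n_i$, and each of the $|\sigma|$ integration variables of $\big(\bigotimes_i|f_i|\big)_\sigma$ is shared by exactly two of the factors. The connectivity requirement in the definition of $\Pi(n_1,\ldots,n_\ell)$ plays no role for this estimate, so I would in fact prove the inequality for every pair-partition satisfying only $|B\cap J_i|\le 1$ and $|B|=2$, allowing moreover some of the $n_i$ to vanish. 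This larger class is stable under the reduction used below, which is exactly what makes the induction close.

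For the base case $|\sigma|=0$ all factors are constants (functions of no variables) and the claimed bound is the trivial identity $\prod_i|f_i|=\prod_i\|f_i\|_{L^2(A^{0})}$. For the inductive step, with $|\sigma|\ge 1$ I would pick an arbitrary block; it yields an edge between two distinct vertices, say $1$ and $2$, and I let $k\ge 1$ be the number of blocks joining them. Writing $\mathbf{s}=(s_1,\ldots,s_k)$ for the variables carried by these $k$ edges, $\mathbf{a}$ for the remaining $n_1-k$ variables attached to $f_1$ and $\mathbf{b}$ for the remaining $n_2-k$ variables attached to $f_2$, the key structural point is that $\mathbf{s}$ occurs only in the two factors $|f_1|$ and $|f_2|$, whereas $\mathbf{a}$ and $\mathbf{b}$ are disjoint and are shared with the factors $f_3,\ldots,f_\ell$. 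Everything being non-negative, Tonelli's theorem lets me carry out the integration over $\mathbf{s}$ first.

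Holding $\mathbf{a},\mathbf{b}$ fixed, the ordinary Cauchy-Schwarz inequality in the variable $\mathbf{s}$ gives
$$
\int_{A^k}|f_1(\mathbf{s},\mathbf{a})|\,|f_2(\mathbf{s},\mathbf{b})|\,\dint\mu^k(\mathbf{s})\le \phi_1(\mathbf{a})\,\phi_2(\mathbf{b})\,,
$$
where $\phi_1(\mathbf{a})^2=\int_{A^k} f_1(\mathbf{s},\mathbf{a})^2\,\dint\mu^k(\mathbf{s})$ and $\phi_2(\mathbf{b})^2=\int_{A^k} f_2(\mathbf{s},\mathbf{b})^2\,\dint\mu^k(\mathbf{s})$. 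Substituting this bound turns $I:=\int_{A^{|\sigma|}}\big(\bigotimes_i|f_i|\big)_\sigma\,\dint\mu^{|\sigma|}$ into the contraction integral attached to the smaller pair-partition $\widetilde\sigma$ obtained from $\sigma$ by deleting the $k$ blocks between $1$ and $2$ and replacing $f_1,f_2$ by $\phi_1,\phi_2$; this $\widetilde\sigma$ lies in the enlarged class and has $|\sigma|-k<|\sigma|$ blocks. Since Tonelli also gives $\|\phi_j\|_{L^2(A^{n_j-k})}=\|f_j\|_{L^2(A^{n_j})}$ for $j\in\{1,2\}$, the induction hypothesis applied to $\widetilde\sigma$ yields $I\le\|\phi_1\|_{L^2(A^{n_1-k})}\,\|\phi_2\|_{L^2(A^{n_2-k})}\prod_{i\ge 3}\|f_i\|_{L^2(A^{n_i})}=\prod_{i=1}^\ell\|f_i\|_{L^2(A^{n_i})}$, which completes the step.

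The Cauchy-Schwarz estimate and the norm identity $\|\phi_j\|=\|f_j\|$ are routine; the step needing the most care is the bookkeeping guaranteeing that, after integrating out $\mathbf{s}$, the remaining expression is genuinely a contraction integral of the same shape, so that the induction hypothesis applies verbatim. Two observations keep this clean: one must enlarge the class of partitions (dropping the connectivity condition and permitting vanishing degrees) so that it is stable under deletion of a bundle of parallel edges, and one should note that symmetry of the $f_i$ is not actually needed here, since integrating $f_j^2$ over any block of its arguments is insensitive to their labelling. I would finally remark that the more naive induction on $\ell$ that tries to remove an entire vertex at once produces a squaring of the remaining integrand and fails to close, whereas peeling one edge-bundle keeps the right-hand side exactly equal to $\prod_i\|f_i\|$ at every stage.
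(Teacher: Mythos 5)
Your argument is correct, but there is nothing in the paper to compare it against: the authors do not prove this lemma, they simply recall it as Lemma~4.1 of the cited work of Bierm\'e, Bonami, Nourdin and Peccati. Taken on its own terms, your induction on $|\sigma|$ is sound. The two bookkeeping points you flag are exactly the right ones, and you handle them correctly: enlarging the class of pair-partitions by dropping the connectivity condition and allowing $n_i=0$ makes the class stable under deletion of the bundle of $k$ parallel blocks between two groups, and the condition $|B\cap J_i|\le 1$ together with $|B|=2$ guarantees that every block joins two \emph{distinct} vertices and that each integration variable is shared by exactly two factors, so the variables $\mathbf{s}$ indeed occur only in $f_1$ and $f_2$ and the inner Cauchy--Schwarz step is legitimate. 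Tonelli justifies both the order of integration and the identity $\|\phi_j\|_{L^2(A^{n_j-k})}=\|f_j\|_{L^2(A^{n_j})}$, and after removing all $k$ blocks between groups $1$ and $2$ no block of the reduced partition joins these two groups, so the reduced integral really is of the same shape and the induction closes. Your closing remark that the naive vertex-removal induction fails (because bounding one whole factor produces an $f_j^2$ in the remaining integrand) is also apt; this is presumably why one peels edge-bundles, as in the reference. In short: a correct, self-contained proof of a statement the paper only quotes.
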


\begin{remark}\label{rem:SharpCSU}\rm
If $B\in\sA$ with $\mu(B)<\infty$ and each of the functions $f_i$ is of the form $$f_i(x_1,\ldots,x_{n_i})=\prod\limits_{j=1}^{n_i}{\bf 1}(x_j\in B)\,,$$ we have equality in Lemma \ref{lem:GeneralizedCauchySchwarz}.
\end{remark}

After these preparations, we can now establish the cumulant bound \eqref{eq:CumumantBound} for the random variable $F_n=I_q(h_n)$.

\begin{proof}[Proof of Theorem \ref{thm:MDP} and Theorem \ref{thm:LDIs}]
Using the cumulant formula provided in Lemma \ref{lem:Kumulanten} we have that
\begin{align*}
\cum_m(F_n) &=\sum_{\sigma\in\Pi(q[m])}\int_{A^{|\sigma|}}(h_n^{\otimes m})_\sigma\,\dint\mu^{|\sigma|}\,.
\end{align*}
To each summand we apply Lemma \ref{lem:CrucialEstimate} with the notation introduced there and Lemma \ref{lem:GeneralizedCauchySchwarz} to see that
\begin{equation}\label{eq:CumXXX}
\begin{split}
\Big|\int_{A^{|\sigma|}}(h_n^{\otimes m})_\sigma\,\dint\mu^{|\sigma|}\Big| &\leq  \int_{A^{|\widetilde{\sigma}|}}\Big(|h_n|^{\otimes m_1}\otimes\bigotimes_{i=1}^{m_2}|h_n\otimes_{r_i}h_n|\Big)_{\widetilde{\sigma}}\,\dint\mu^{|\widetilde{\sigma}|}\\
&\leq \|h_n\|_{L^2(A^q)}^{m_1}\,\prod_{i=1}^{m_2}\|h_n\otimes_{r_i}h_n\|_{L^2(A^{2(q-r_i)})}\\
&\leq K_n^{L(q,m)}
\end{split}
\end{equation}
with $K_n$ defined at \eqref{eq:defKn}. Here, we have used that $L(q,m)\leq m_2$, the assumption that $\|h_n\|_{L^2(A^q)}=1$ and that $K_n\leq 1$. The latter property is a consequence of the Cauchy-Schwarz inequality, implying that $\|h_n\otimes_rh_n\|_{L^2(A^{2(q-r)})}^2\leq\|h_n\|_{L^2(A^q)}^4=1$ for all $r\in\{1,\ldots,q-1\}$.
Thus, \eqref{eq:CumXXX} yields that
\begin{align*}
|\cum_m(F_n)| \leq |\Pi(q[m])|\,K_n^{L(q,m)}\,.
\end{align*}
It has been shown in Proposition 5.3 of \cite{SaulisBuch} that $|\Pi(q[m])|$ is bounded from above by
\begin{equation}\label{eq:BoundCqm}
|\Pi(q[m])|\leq (m!)^{q/2}(q^{q/2})^m\,.
\end{equation}
Moreover, it follows from the definition of $L(q,m)$ that $L(q,m)\geq\alpha(q)(m-2)$ with $\alpha(q)$ defined at \eqref{eq:defAlphaq}. Consequently,
\begin{align*}
|\cum_m(F_n)| &\leq (m!)^{q/2}(q^{q/2})^m\,K_n^{\alpha(q)(m-2)}\leq (m!)^{q/2} (q^{3q/2})^{m-2}\,K_n^{\alpha(q)(m-2)}\,,
\end{align*}
where we have used that $3(m-2)\geq m$ for $m\geq 3$. Choosing
$$\gamma=\frac{q}{2}-1\quad\text{and}\quad\Delta_n^{-1}=q^{3q/2}\,K_n^{\alpha(q)}$$
establishes the cumulant bound \eqref{eq:CumumantBound}. In view of Lemma \ref{lem:CumulantBoundImplyMDPandConcentration} this concludes the proof of Theorem \ref{thm:MDP} and Theorem \ref{thm:LDIs}.
\end{proof}

\begin{remark}\label{rem:lowerBound}\rm
\begin{itemize}
\item[(i)] Proposition 5.3 in \cite{SaulisBuch} also gives the lower bound
\begin{equation}\label{eq:LBoundCqm}
|\Pi(q[m])|\geq \frac{1}{8}(m!)^{q/2}(\sqrt{2})^m\,.
\end{equation}
Comparison of \eqref{eq:BoundCqm} and \eqref{eq:LBoundCqm} shows that we can in general choose $\gamma$ not smaller than $\frac{q}{2}-1$. This goes hand in hand with the observations made in Remark \ref{rem:Weibull} above.

In contrast to our situation, it is typically a difficult task to decide whether for given random variables the parameter $\gamma$ in the cumulant estimate \eqref{eq:CumumantBound} is optimal or not. Such a situation arises, for example, in \cite{EichelsbacherSchreiberRaic}, where problems from geometric probability have been considered. There,  $\gamma$ is different from zero and depends on the particular model and even on the space dimension.

\item[(ii)] Because of the sharpness of the lower bound on the matching number and the sharpness of the generalized Cauchy-Schwarz inequality discussed in Remarks \ref{rem:SharpLqm} and \ref{rem:SharpCSU}, there are situations for which one has equality in all estimates of \eqref{eq:CumXXX}. This shows that the exponent of $K_n$ in $\Delta_n$ is optimal in general.
\end{itemize}
\end{remark}

We finally establish Proposition \ref{prop:SumHermite} based on arguments from the proof of Theorem 6.12 in \cite{Janson}.

\begin{proof}[Proof of Proposition \ref{prop:SumHermite}]
Since $F_n$ is a linear combination of $H_q(X(e_k))$, $k\in\{1,\hdots,n\}$, it is by definition an element of the $q$th Wiener chaos $\cC_q$, and we have that $F_n=I_q(h_n)$ with $h_n=\frac{1}{\sqrt{n}} \sum_{k=1}^n e_k^{\otimes q}$ for $n\geq 1$. Hence,
$$
\Var F_n=\EE[I_q(h_n)^2]=q!\|h_n\|^2_{\cH^{\otimes q}}=q!
$$
and
$$
K_n=\max_{r=1,\hdots,q-1}\|h_n\otimes_r h_n\|_{\cH^{\otimes 2(q-r)}}=\max_{r=1,\hdots,q-1}\frac{1}{n} \Big\|\sum_{k=1}^n e_k^{\otimes 2(q-r)}\Big\|_{\cH^{\otimes 2(q-r)}}=\frac{1}{\sqrt{n}}\,.
$$
For $n\geq 1$ define $S_n=\sum_{k=1}^n H_q(X(e_k))$ and note that $S_2$ can be regarded as a polynomial of degree $q$ depending on the random variables $X(e_1)$ and $X(e_2)$. Thus, it follows from Equation (6.10) in \cite{Janson} that there are constants $\widetilde{C},\widetilde{c},\widetilde{t}_0>0$ such that
\begin{equation}\label{eq:BoundS2}
\PP(|S_2|\geq t) \geq \widetilde{C} \exp(-\widetilde{c} t^{2/q}) \qquad \text{ for all } \qquad t\geq \widetilde{t}_0 \,.
\end{equation}
Since $(S_n:n\geq 1)$ is a martingale with respect to the natural filtration induced by the random variables $(X(e_k): k\geq 1)$ we have that
\begin{equation}\label{eq:BoundConditional}
\EE[ |S_n| \,|\, X(e_1),X(e_2) ] \geq |S_2|\qquad\qquad\PP\text{-a.s.}
\end{equation}
for $n\geq 2$. Hence,
$$
\PP(|S_n|\geq u) \geq \PP(|S_n|\geq |S_2|/2, |S_2|\geq 2u) \geq \PP(|S_n|\geq \EE[|S_n| \,|\, X(e_1),X(e_2)]/2, |S_2|\geq 2u) \,,
$$
where we used \eqref{eq:BoundConditional} for the second inequality.  After re-writing, we get
$$
\PP(|S_n|\geq u) \geq \EE[\, \EE[ {\bf 1}\{|S_n| \geq \EE[|S_n| \,|\, X(e_1),X(e_2)]/2\} \,|\, X(e_1),X(e_2)] \, {\bf 1}\{|S_2|\geq 2u\}\,]\,.
$$
If $X(e_1)$ and $X(e_2)$ are given, $S_n$ is the sum of an element in the $q$th Wiener chaos and a constant so that Theorem 6.9 in \cite{Janson} yields that
$$
 \EE[ {\bf 1}\{|S_n| \geq \EE[|S_n| \,|\, X(e_1),X(e_2)]/2\} \,|\, X(e_1),X(e_2)] \geq c_q\qquad\qquad\PP\text{-a.s.}
$$
with a constant $c_q>0$ only depending on $q$. We thus obtain that
$$
\PP(|S_n|\geq u) \geq c_q \, \PP(|S_2|\geq 2u) \,.
$$
Now, the estimate \eqref{eq:BoundS2} yields, for $z\geq z_0:=\widetilde{t}_0$, that
$$
\PP( |F_n| \geq z)=\PP( |S_n| \geq \sqrt{n}z) \geq c_q \widetilde{C}\, \exp(-2^{2/q}\widetilde{c}\, n^{1/q} z^{2/q}) \,,
$$
which proves \eqref{eq:LowerBoundHermite}.

Since $a_n\to\infty$, as $n\to\infty$, it follows from \eqref{eq:LowerBoundHermite} that, for any $t>0$,
\begin{align*}
\limsup_{n\to\infty} a_n^{-2} \log \PP(a_n^{-1} F_n\geq t) & \geq \limsup_{n\to\infty} a_n^{-2} \log( C \exp(-c\, n^{1/q} (a_n t)^{2/q}))\\
& = \limsup_{n\to\infty} \frac{-c n^{1/q} (a_n t)^{2/q}}{a_n^2}\,.
\end{align*}
If $n^{1/(2q-2)}/a_n\to 0$ as $n\to\infty$, the right-hand side converges to zero, implying that $(a_n^{-1} F_n:n\geq1)$ cannot satisfy a MDP with speed $a_n^2$ and Gaussian rate function.
\end{proof}

\subsection*{Acknowledgements}
Parts of this paper were written during a Research-in-Pairs stay of the authors at Mathema\-tisches Forschungsinstitut Oberwolfach. All support is gratefully acknowledged. We also thank Sabine Jansen for a stimulating discussion.

MS has been funded by the German Research Foundation (DFG) through the research unit ``Geome\-try and Physics of Spatial Random Systems" under the grant HU 1874/3-1. CT has been supported by the German research foundation (DFG) via SFB-TR 12.



\begin{thebibliography}{30}\small

\bibitem{BentkusRudzkis}
{\sc V. Bentkus and R. Rudzkis}: {\em Exponential estimates of the distribution of random variables}, Lithuanian Math. J. \textbf{20}, 15--30 (1980).

\bibitem{BiermeEtAl}
{\sc H. Bierm\'e, A. Bonami, I. Nourdin and G. Peccati}: {\em Optimal Berry-Esseen rates on the Wiener space: the barrier of third and fourth cumulants}, ALEA Lat. Am. J. Probab. Math. Stat. \textbf{9}, 473--500 (2012).

\bibitem{BretonNourdin}
{\sc J.-C. Breton and I. Nourdin}: {\em Error bounds on the non-normal approximation of Hermite power variations of fractional Brownian motion}, Electron. Comm. Probab. \textbf{13}, 482--493 (2008).

\bibitem{DemboZeitouni}
{\sc A. Dembo and O. Zeitouni}: {\em Large Deviations. Techniques and Applications}, 2nd Edition, Springer, New York (1998).

\bibitem{EichelsbacherDoering}
{\sc H. D\"oring and P. Eichelsbacher}: {\em Moderate deviations via cumulants}, J. Theor. Probab.  \textbf{26}, 360--385 (2013).

\bibitem{EichelsbacherSchreiberRaic}
{\sc P. Eichelsbacher, T. Schreiber and M. Rai\v{c}}: {\em Moderate deviations for stabilizing functionals in geometric probability}, to appear in Ann. Inst. H. Poincar\'e Probab. Statist. (2014).

\bibitem{Janson}
{\sc S. Janson}: {\em Gaussian Hilbert Spaces}, Cambridge University Press, Cambrige (1997).

\bibitem{Jeulin}
{\sc T. Jeulin}: {\em Semimartingales et Groissement d'une Filtration}, Lecture Notes in Mathematics \textbf{833}, Springer, Berlin (1980).

\bibitem{Major}
{\sc P. Major}: {\em On the estimation of multiple random integrals and degenerate U-statistics}, Lecture Notes in Mathematics  \textbf{2079}, Springer, Berlin (2013).

\bibitem{MarinucciPeccati}
{\sc D. Marinucci and G. Peccati}: {\em Random Fields on the Sphere}, Cambridge University Press, Cambridge (2011).

\bibitem{Nishizeki}
{\sc T. Nishizeki}: {\em On the maximum matchings of regular multigraphs}, Discrete Math. \textbf{37}, 105--114 (1981).


\bibitem{NourdinfBM}
{\sc I. Nourdin}: {\em Selected Aspects of Fractional Brownian Motion}, Bocconi University Press and Springer, Milan (2012).

\bibitem{NourdinPeccati09}
{\sc I. Nourdin and G. Peccati}: {\em Stein's method on Wiener chaos}, Probab. Theory Related Fields \textbf{145}, 75--118 (2009).

\bibitem{NourdinPeccatiCumulants}
{\sc I. Nourdin and G. Peccati}: {\em Cumulants on the Wiener space}, J. Funct. Anal. \textbf{258}, 3775--3791 (2010).

\bibitem{NourdinPeccatiBook}
{\sc I. Nourdin and G. Peccati}: {\em Normal Approximations with Malliavin Calculus: From Stein's Method to Universality}, Cambridge University Press, Cambridge (2012).

\bibitem{NourdinPeccatiOptimalRates}
{\sc I. Nourdin and G. Peccati}: {\em The optimal fourth moment theorem}, to appear in Proc. Am. Math. Soc. (2014).


\bibitem{NualartPeccati}
{\sc D. Nualart and G. Peccati}: {\em Central limit theorems for sequences of multiple stochastic integrals}, Ann. Probab. \textbf{33}, 177--193 (2005).

\bibitem{PeccatiTaqquBook}
{\sc G. Peccati and M.S. Taqqu}: {\em Wiener Chaos: Moments, Cumulants and Diagrams}, Bocconi University Press and Springer, Milan (2011).

\bibitem{PeccatiYor}
{\sc G. Peccati and M. Yor}: {\em Hardy's inequality in $L^2([0,1])$ and principal values of Brownian local times}, In: Asymptotic Methods in Stochastics (Eds. L. Horv\'ath and B. Szyszkowicz), Fields Institute Communications Series, American Mathematical Society, Providence (2004).

\bibitem{RudzkisSaulisStatu}
{\sc R. Rudzkis, L. Saulis and V.A. Statulevi\v{c}ius}: {\em A general lemma on probabilities of large deviations}, Lithuanian Math. J. \textbf{18}, 99--116 (1978).

\bibitem{SaulisBuch}
{\sc L. Saulis and V.A. Statulevi\v{c}ius}: {\em Limit Theorems for Large Deviations}, Kluwer Academic Publishers, Dodrecht (1991).

\end{thebibliography}
\end{document}